\newtheorem{theorem}{Theorem}[section]
\newtheorem{lemma}[theorem]{Lemma}
\newtheorem{proposition}[theorem]{Proposition}
\theoremstyle{definition}
\newtheorem{definition}[theorem]{Definition}
\newtheorem{remark}[theorem]{Remark}
\theoremstyle{remark}
\numberwithin{equation}{section}
\newcommand{\eps}{\varepsilon}
\newcommand{\calF}{\mathcal{F}}
\newcommand{\calD}{\mathcal{D}}
\newcommand{\calP}{\mathcal{P}}
\newcommand{\calS}{\mathcal{S}}
\newcommand{\calC}{\mathcal{C}}
\renewcommand{\P}{\operatorname{\mathds{P}}} 
\newcommand{\R}{\mathds{R}}
\newcommand{\ol}{\overline}
\newcommand{\wh}{\widehat}
\newcommand{\wt}{\widetilde}
\newcommand{\norm}[1]{\Vert #1 \Vert}
\title[Skorokhod problem]{Uniqueness for the Skorokhod problem
in an orthant: critical cases}
\author{Richard F. Bass and Krzysztof Burdzy}
\address{RFB: Department of Mathematics, University of Connecticut,
Storrs, CT 06269-1009}
\email{r.bass@uconn.edu}
\address{KB: Department of Mathematics, Box 354350, University of Washington, Seattle, WA 98195}
\email{burdzy@uw.edu}
\thanks{Research supported in part by Simons Foundation Grant 928958. }
\begin{document}

\begin{abstract}
Consider  the Skorokhod problem in the closed non-negative orthant:
find a solution $(g(t),m(t))$ to
\[ g(t)= f(t)+ Rm(t),\]
where $f$ is a given continuous vector-valued function with $f(0)$ in the orthant, 
$R$ is a given $d\times d$ matrix with 1's along the diagonal, 
$g$ takes values in the orthant, and $m$ is a vector-valued function
that starts at 0, each component of $m$ is non-decreasing and continuous, and 
for each $i$ the
$i^{th}$ coordinate of $m$ increases only when the $i^{th}$ coordinate of 
$g$ is 0. The stochastic version of the Skorokhod problem replaces $f$ by the 
paths of Brownian motion.
It is known that there exists a unique solution to the Skorokhod problem
if the
spectral radius of $|Q|$ is less than 1, where $Q=I-R$ and $|Q|$ is
the matrix whose entries are the absolute values of the corresponding 
entries of $Q$. The first result of  this paper shows pathwise uniqueness 
for the stochastic version of the Skorokhod problem holds
if the spectral radius of $|Q|$ is equal to 1.
The second result of this paper settles the remaining open cases for 
uniqueness for the deterministic version when the dimension $d$ is  two.
\end{abstract}

\maketitle

\section{Introduction}\label{sect-intro}

We consider both the deterministic and stochastic versions of 
the Skorokhod problem in the $d$-dimensional 
orthant $D=\{(x_1, \ldots, x_d): x_i\ge 0, i=1, \ldots, d\}$ with oblique reflection on the boundary
of $D$ that is constant on each face of $D$.

First we define the deterministic Skorokhod problem.
Let  $R$ be a fixed $d\times d$ matrix.

\begin{definition}\label{intro-SP} {A driving function $f$  is a continuous function
from $[0,\infty)$ to $\R^d$ with $f(0)\in D$. The deterministic
Skorokhod problem is to find $g(t)$ and $m(t)$ so that\\
(1) $g$ is a continuous function from $[0,\infty)$ to $D$;\\
(2) $m$ is a continuous  $d$ dimensional function on $[0,\infty)$ with $m(0)=0$ and
each component of $m$ is non-decreasing;\\
(3) the Skorokhod equation holds, namely, $g(t)=f(t)+Rm(t)$ for all $t\ge 0$;\\
\phantom{xxx}  and\\
(4) $m_j$ increases only at those time $t$ when $g_j(t)=0$, $j=1,\dots,d.$
}
\end{definition}

The stochastic Skorokhod problem is very similar, but with the driving function
replaced by  a Brownian motion.
Let $(\Omega, \calF, \{\calF_t\},  \P)$
be a filtered probability space with $\{\calF_t\}$ being a right continuous
complete filtration.

\begin{definition}\label{skor-prob}
Let $B(t)$ be standard $d$-dimensional Brownian motion in $\R^d$ started at
$x_0\in D$ and adapted to $\{\calF_t\}$.
The stochastic Skorokhod problem is to find $X(t)$ and $M(t)$, adapted to $\{\calF_t\}$, so that
almost surely\\
(1)   $X$ is a  
continuous $d$-dimensional vector-valued process taking values in $D$;\\
(2) $M$ is a continuous $d$ dimensional vector-valued process with $M(0)=0$
and each component 
is non-decreasing;\\
(3) the Skorokhod equation holds, namely,
$X(t)=B(t)+RM(t)$  for all  $t\ge 0$;\\
\phantom{xxx}  and\\
(4)  $M_j$ increases only at those times $t$ when 
$X_j(t)=0$.
\end{definition}

Nothing precludes $\{\calF_t\}$ being strictly
larger than the filtration generated by $B$. 
We refer to $X$ as obliquely reflecting
Brownian motion (ORBM) in $D$. 
The process $M_i$ is referred to as the local time of $X$ on the $i^{th}$
face $D_i=\{x\in D: x_i=0\}$. When $X(t)\in D_i$, the direction of reflection
is given by the $i^{th}$ row of $R$.
An edge is a set of the form $D_j\cap D_k$ for some $j\ne k$.
For the cases we are interested in,
no $M_i$ charges any edge (that is, if $E$ is an edge, then 
$\int_0^t 1_E(X_s)\, dM_i(s)=0$ for all $t$ almost surely) and so
the direction of reflection will be  immaterial 
when $X(t)$ is at an edge 
as long as it is deterministic and points into the interior of $D$;
see \cite{RW}. 

ORBM is the limit process in many queueing
theory models, and there is a large literature concerning this topic. See
\cite{W95} for some
of the references. The model has also been of independent interest
in probability theory and is related to
partial differential equations with oblique boundary conditions.

 For a vector $v$, writing $v>0$ means each component
of $v$ is positive, $v\ge 0$ that each component is non-negative. 
A matrix $R$ is an $\calS$-matrix if there exists $x\ge 0$ with $Rx>0$. 
A principal submatrix of $D$ is a square matrix obtained from $R$ by deleting
some of the rows and corresponding columns of $R$. A matrix $R$ is called completely-$\calS$ if
$R$ and all of its principal submatrices are $\calS$-matrices. 
A necessary and sufficient condition  for  the deterministic Skorokhod problem 
to have a solution for all continuous  driving functions $f$
is that  $R$ be completely-$\calS$; 
see \cite{BEK}, \cite{MVdH},  and \cite{DW}. 
In Remark \ref{diagonal-ones} 
we show that, as far as questions of existence and uniqueness of solutions to the 
Skorokhod problem go, we may assume that all the diagonal elements of $R$
are equal to 1. We make that assumption throughout the remainder of the paper.

There has been considerable interest in when the solutions to the 
deterministic and stochastic
Skorokhod problems
are unique. Harrison and Reiman \cite{HR} proved that 
uniqueness holds for both problems (pathwise uniqueness in the stochastic case)
 if $Q=I-R$ is the transition matrix of a class of transient
Markov chains and Williams \cite{W95} observed that the proof actually
holds provided the spectral radius of $|Q|$ is strictly less than 1, where
$|Q|$ is the matrix whose entries are the absolute values of the corresponding
entries of $Q$.

The first main result of this paper  concerns pathwise uniqueness for
the stochastic
Skorokhod equation, and can be considered a critical case since we look at when the
spectral radius of $|Q|$ is exactly 1.    

\begin{theorem}\label{main-theorem}
Let $B$ be a Brownian motion adapted to $\{\calF_t\}$.  
Suppose $R$ is completely-$\calS$ and the spectral radius of $|Q|$ is less than or equal to 1.
If $(X,M)$ and $(X',M')$ are two solutions to the Skorokhod problem 
given by
Definition \ref{skor-prob}, then almost surely $(X(t),M(t))=(X'(t),M'(t))$
for all $t$.
Moreover there exists a solution $(X,M)$ to the Skorokhod equation that is
adapted to the filtration generated by $B$.
\end{theorem}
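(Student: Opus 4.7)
The plan is to first establish pathwise uniqueness for two solutions $(X,M)$, $(X',M')$ driven by the same Brownian motion $B$, and then deduce the existence of a solution adapted to the filtration of $B$ via a Yamada--Watanabe style argument. Set $N = M - M'$ and $Z = X - X' = RN$. Since the two processes share the same martingale part, $Z$ is continuous and of locally bounded variation, so Tanaka's formula reduces to $d|Z_i| = \mathrm{sgn}(Z_i)\,dZ_i$. The key pointwise observation is that $\mathrm{sgn}(Z_i)\,dN_i \le 0$ for every $i$: on $\{Z_i>0\}$ one has $X_i>X'_i\ge 0$, hence $X_i>0$, hence $dM_i=0$ and $dN_i=-dM'_i\le 0$; the case $Z_i<0$ is symmetric and $Z_i=0$ is trivial. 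Combined with $|R_{ij}|=|Q_{ij}|$ for $i\ne j$, this yields the componentwise differential inequality
\[
d|Z_i|(t) \;\le\; \sum_{j\ne i}|Q_{ij}|\,dV_j(t),
\]
where $V_j$ is the total-variation process of $N_j$.

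Let $v\ge 0$ be a nontrivial left Perron eigenvector of $|Q|$, so $v^T|Q|=\lambda v^T$ with $\lambda\le 1$ the spectral radius. Multiplying by $v_i$ and summing gives
\[
\sum_i v_i|Z_i|(t) \;\le\; \lambda\sum_j v_j V_j(t).
\]
In the subcritical case $\lambda<1$ treated by Harrison--Reiman and Williams, iteration in an auxiliary sup-norm produces a contraction and forces $Z\equiv 0$. In the critical case $\lambda=1$ the inequality is tight and must be closed by genuinely using the Brownian structure.

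To do so I would decompose $dV_j = dM_j + dM'_j - 2\,dC_j$, where $dC_j = dM_j\wedge dM'_j$ is supported on the common-zero set $\{X_j=X'_j=0\}$. The single-face contribution, where exactly one of $X_j,X'_j$ vanishes, is directly controlled by $|Z_j|$: on $\{X_j=0,X'_j>0\}$ one has $X'_j=|Z_j|$, and the Skorokhod equation then bounds the rate at which $dM_j$ can accumulate in terms of $|Z|$ and the Brownian path. For the common-zero piece, one uses that $\{X_j=X'_j=0\}$ for $i\ne j$ lies in an edge of $D$, combined with the introductory remark (relying on \cite{RW}) that no $M_i$ charges any edge when $R$ is completely-$\calS$. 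Together these allow one to replace $V_j(t)$ in the critical inequality by a quantity dominated by $\sum_i v_i|Z_i|(t)$ plus a term that vanishes by the no-edge-local-time property; Gronwall then forces $Z\equiv 0$ and hence $M=M'$.

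Once pathwise uniqueness is in hand, the existence of a solution adapted to the Brownian filtration follows from a Yamada--Watanabe argument: the deterministic Skorokhod problem is solvable for every continuous driving path because $R$ is completely-$\calS$, so applying this solution map to the Brownian trajectory produces a weak solution realized as a measurable functional of $B$, and pathwise uniqueness promotes it to a strong one. The main obstacle is precisely the step just described: at $\lambda=1$ the Perron inequality does not contract, and the proof must genuinely extract a strict estimate from Brownian local-time behavior on faces versus edges of $D$. Matching the eigenvector weights with this local-time control across all completely-$\calS$ matrices $R$ and all dimensions $d$ is the delicate technical heart of the theorem.
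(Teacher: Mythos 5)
There is a genuine gap, and it sits exactly where you place the ``technical heart'' of your argument. Your Perron-weighted Tanaka inequality bounds $\sum_i v_i|Z_i|(t)$ by $\lambda\sum_j v_j V_j(t)$, where $V_j$ is the \emph{total variation} of $N_j=M_j-M_j'$, not $|Z_j|$ or $|N_j|$; there is no a priori domination of $V_j$ by any function of $Z$, so the inequality does not close even when $\lambda<1$. (The Harrison--Reiman/Williams proof in the subcritical case is not a Tanaka argument at all: it is a contraction, in a weighted sup norm, of the reflection operator $m\mapsto \sup_{s\le\cdot}(Qm-f)^+$.) Your proposed repair in the critical case does not work as stated: the set $\{X_j=X_j'=0\}$ is \emph{not} an edge of $D$ --- an edge is $D_j\cap D_k$ for two distinct coordinates of a single process --- so the no-edge-local-time result of \cite{RW} says nothing about the common-zero set of the $j$th coordinates of two different solutions. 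Likewise, on $\{X_j=0,\,X_j'>0\}$ the identity $X_j'=|Z_j|$ gives no control on the \emph{rate} at which the local time $M_j$ accumulates; local time is not bounded by the size of a nearby quantity, so no Gronwall inequality results. Since pathwise uniqueness genuinely fails for some completely-$\calS$ matrices (see \cite{BaBu}), any argument must use $\rho(|Q|)\le 1$ in a way that produces a strict estimate, and your sketch never identifies such a mechanism.

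The paper proceeds in the opposite order and avoids these issues entirely. It first \emph{constructs} a strong solution: for irreducible $|Q|$ it shows the reflection operator $T$ is non-expansive in the Perron-weighted sup norm and that the averaged iterates $U=\tfrac12 I+\tfrac12 T$ are equicontinuous (using $\rho(Q^+)<\rho(|Q|)\le 1$, a strict inequality obtained from irreducibility), so Ishikawa's fixed point theorem \cite{Ishikawa} yields a fixed point of $T$ adapted to $\{\calF_t\}$; the reducible case is handled by decomposing the index set into communicating classes and inducting on dimension. Pathwise uniqueness is then deduced from strong existence plus the weak uniqueness of Taylor--Williams \cite{TW} by the standard ``$X(r)=\varphi(B)$ a.s.\ and $(Z,B)\overset{d}{=}(X,B)$'' argument --- the reverse of your Yamada--Watanabe step. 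Note also that your claim that completely-$\calS$ existence for the deterministic problem yields a solution ``realized as a measurable functional of $B$'' is precisely what is not known from \cite{BEK}; producing such an adapted solution is the main content of the paper's construction.
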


The proofs in \cite{HR,W95} use the contraction mapping principle and require that the spectral radius of $|Q|$ be strictly less than 1.
The proof of Theorem \ref{main-theorem} is necessarily quite different.
Our tools include a fixed point theorem of Ishikawa \cite{Ishikawa} as well as
a decomposition of $Q$ analogous to that of a finite state Markov chain.

The current authors recently showed in \cite{BaBu} that
there is a large class of $2\times 2$ matrices which are completely-$\calS$ but for which
pathwise uniqueness for the corresponding ORBM does  not hold.

The questions of  uniqueness in law  for the solution of  
the stochastic Skorokhod problem and the strong Markov property for the set of
 solutions as the starting point varies 
have been settled by \cite{TW}.

One can 
equivalently write the
Skorokhod equation in terms of a real-valued local time; see \cite[Section 2]{BaBu}.
Also for matrices $R$ that are not
completely-$\calS$, one can characterize ORBMs that correspond to $R$, but they will not
be solutions to the Skorokhod equation; see \cite{VW}. 

The proof of Theorem \ref{main-theorem} for the case where $|Q|$ is what is known as an irreducible matrix is given in Section \ref{sect-irred}.
The general case is given in Section \ref{sect-general}.

Our second main result concerns uniqueness for the deterministic Skorokhod problem 
equation in two
dimensions and consists of two theorems. Suppose $R$ has the form 
$R=\begin{pmatrix}
        1 & a_1  \\
        a_2& 1 \\
        \end{pmatrix}$.

There are five cases to consider:\\
(1) $|a_1a_2|<1$;\\
(2) $|a_1a_2|=1$, $a_1,a_2$ are of opposite signs;\\
(3) $|a_1a_2|=1$, $a_1,a_2$ are both positive;\\
(4) $|a_1a_2|>1$, $a_1,a_2$ are of opposite signs;\\
(5) $|a_1a_2|>1$, $a_1,a_2$ are both positive.\\
\noindent (It is easy to see that $R$ will not be 
completely-$\calS$ if $|a_1a_2|\ge 1$ and $ a_1,a_2$ are both negative.)

Uniqueness holds in Case (1) by \cite{HR,W95}. Mandelbaum \cite{Man} gave a brief
sketch to show uniqueness holds for Case (2). He also provided in \cite{Man}  a detailed proof
for a counterexample to show uniqueness fails in Case (4). 

Mandelbaum's paper \cite{Man} has never been published. However an exposition of his results for Cases (2) and (4) may be
found in \cite{Ba-lec-notes}. The counterexample (Case (4)) has been also presented in the
Ph.D.~thesis of Whitley \cite{Whit}. Stewart \cite{St} used similar methods
to prove a counterexample in a related problem.
See also \cite{BEK} for a counterexample to uniqueness when the dimension is 3.

Our first theorem covers Case (3), which may be considered a critical case.

\begin{theorem}\label{T1-uniq}
Suppose $a_1>0, a_2>0, a_1a_2=1$, $f$ is a continuous driving function, and $(g,f,m)$ and $(\ol g, f, \ol m)$
are two solutions to the deterministic Skorokhod problem with matrix
$R=\begin{pmatrix}
        1 & a_1  \\
        a_2& 1 \\
        \end{pmatrix}$. Then $g(t)=\ol g(t)$ for all $t$.
\end{theorem}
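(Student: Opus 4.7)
The key observation is that when $a_1 a_2 = 1$ with $a_1, a_2 > 0$, the matrix $R$ is singular with rank one, and its range is the ray spanned by $v := (1, a_2)^T$ (equivalently by $(a_1, 1)^T = a_1 v$). Geometrically, the oblique reflection directions at the two faces $D_1$ and $D_2$ are parallel, so every push from the boundary lies on the single ray $\R_+ v$. The plan is to exploit this to reduce the two-dimensional Skorokhod problem to the classical one-dimensional one, whose uniqueness is elementary.

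Given any solution $(g,m)$, I would set $\ell(t) := m_1(t) + a_1 m_2(t)$. A direct computation using $a_1 a_2 = 1$ gives
\[
Rm(t) \,=\, \begin{pmatrix} m_1(t) + a_1 m_2(t) \\ a_2 m_1(t) + m_2(t) \end{pmatrix} \,=\, \ell(t)\, v,
\]
so $g(t) = f(t) + \ell(t) v$. Since $a_1 > 0$ and each $m_j$ is continuous, non-decreasing and starts at zero, the same holds for $\ell$. The requirement $g(t) \in D$ translates into the pair of scalar inequalities $\ell(t) \ge -f_1(t)$ and $\ell(t) \ge -f_2(t)/a_2$; equivalently,
\[
\ell(t) \ge \psi(t) := \max\bigl(-f_1(t),\, -f_2(t)/a_2\bigr) \qquad \text{for all } t \ge 0,
\]
with $\psi(0) \le 0$ because $f(0) \in D$.

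Next I would pin down when $\ell$ can grow. Because $d\ell = dm_1 + a_1\,dm_2$ and condition (4) of Definition \ref{intro-SP} confines the support of $dm_j$ to $\{t: g_j(t)=0\}$, the support of $d\ell$ lies in $\{g_1=0\}\cup\{g_2=0\}$. On $\{g_1(t)=0\}$ the identity $g_1 = f_1 + \ell$ forces $\ell(t) = -f_1(t)$, and on $\{g_2(t)=0\}$ one has $\ell(t) = -f_2(t)/a_2$; in either case $\ell(t) = \psi(t)$. Thus $\ell$ is a continuous non-decreasing function with $\ell(0)=0$, $\ell \ge \psi$ on $[0,\infty)$, and $d\ell$ supported on $\{\ell = \psi\}$: this is exactly the scalar Skorokhod problem driven by $\psi$, whose unique solution is $\ell(t) = \sup_{0 \le s \le t} \psi(s)^+$. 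Hence $\ell$, and therefore $g = f + \ell v$, is completely determined by $f$, proving $g = \ol g$.

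There is no serious obstacle once the rank-one structure, or equivalently the fact that the two reflection rays coincide, is recognized; the two points that merit careful verification are the identity $Rm = \ell v$, which is exactly where the hypothesis $a_1 a_2 = 1$ is used, and the standard uniqueness for the one-dimensional Skorokhod map, easily proved by comparing two candidates $\ell^{(1)}, \ell^{(2)}$ through the integration-by-parts identity for $(\ell^{(1)} - \ell^{(2)})^2$. Observe finally that this argument deliberately does not give uniqueness of $m$: whenever $g_1(t) = g_2(t) = 0$ (which forces $t$ to be a visit of $g$ to the origin), the increment of $\ell$ may be split between $dm_1$ and $a_1\,dm_2$ in any non-negative proportion, consistent with the theorem statement, which asserts uniqueness only of $g$.
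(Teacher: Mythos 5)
Your proof is correct, and it takes a genuinely different (though related) route from the paper's. Both arguments rest on the same structural fact --- when $a_1a_2=1$ the two reflection directions are parallel, so the problem is essentially one-dimensional --- but they exploit it differently. The paper first applies a diagonal rescaling (Lemma \ref{L1-uniq}) to reduce to $a_1=a_2=1$, then rotates to the orthonormal basis $u_1=\tfrac1{\sqrt2}(1,1)$, $u_2=\tfrac1{\sqrt2}(1,-1)$, in which $g_{u_2}=f_{u_2}$ is trivially determined and $g_{u_1}-f_{u_1}$ is non-decreasing; uniqueness of $g_{u_1}$ is then obtained by a soft comparison argument showing that $|g_{u_1}(t)-\ol g_{u_1}(t)|$ cannot increase while it is positive. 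You skip both the rescaling and the rotation: you write $Rm=\ell v$ with $\ell=m_1+a_1m_2$ and $v=(1,a_2)^T$ directly (this is exactly where $a_1a_2=1$ enters), and identify $\ell$ as the solution of the scalar Skorokhod problem driven by $\psi=\max(-f_1,-f_2/a_2)$. The two steps that needed checking do hold: $g\in D$ forces $\ell\ge\psi$ (using $a_2>0$ to divide), and on $\{g_j=0\}$ the constraint on the \emph{other} coordinate forces $\ell=\psi$, so $d\ell$ is carried by $\{\ell=\psi\}$. Your route buys the explicit formula $g(t)=f(t)+\bigl(\sup_{s\le t}\psi(s)^+\bigr)v$, which is slightly more than the paper's qualitative uniqueness statement and requires no preliminary normalization; the paper's argument is softer, using only monotonicity rather than the explicit one-dimensional reflection map. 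Your closing observation that $m$ itself need not be unique, with the increment of $\ell$ split arbitrarily between $dm_1$ and $a_1\,dm_2$ at visits to the corner, matches the paper's Remark \ref{R1-uniq}.
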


Although $g$ is uniquely determined, $m$ is not: see Remark \ref{R1-uniq}.
We point out that the argument we use to prove Theorem \ref{T1-uniq} is well 
known to experts in the field. 
If we allow discontinuous driving functions $f$, in particular piecewise
constant ones, then  there are cases where there will be non-uniqueness for some driving functions (\cite{Man}).

Our second theorem settles Case (5).
This result and its proof are new.
Our method was motivated by 
the proofs in \cite{BaBu}.

\begin{theorem}\label{T2-non}
Suppose  $R=\begin{pmatrix}
        1 & a_1  \\
        a_2& 1 \\
        \end{pmatrix}$
and $a_1>0, a_2>0, a_1a_2>1$. Then there is a continuous driving function $f$ for which there is more
than one solution to the deterministic Skorokhod problem.
\end{theorem}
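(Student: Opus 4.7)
The plan is to exploit the geometry of the two reflection vectors $v_1 = (1, a_2)^T$ and $v_2 = (a_1, 1)^T$, i.e., the columns of $R$: an increase $dm_1$ moves $g$ in direction $v_1$, and an increase $dm_2$ moves $g$ in direction $v_2$. Under the hypotheses $a_1, a_2 > 0$ and $a_1 a_2 > 1$, the vectors $v_1, v_2$ are linearly independent and the open cone $\{\alpha v_1 + \beta v_2 : \alpha, \beta > 0\}$ is a genuine two-dimensional sector sitting in the interior of the first quadrant. The idea is that from the corner $(0,0)$, any driving motion $-u$ with $u$ in the interior of this cone can be answered by the reflection mechanism in multiple ways, producing distinct solutions.

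Concretely, I would take the continuous driving function
\[
f(t) = -(1 + a_1,\, 1 + a_2)\, t, \qquad t \ge 0,
\]
which satisfies $f(0) = 0 \in D$ and uses the interior direction $(1+a_1, 1+a_2)^T = v_1 + v_2$. A first solution (pinned at the origin) is $g(t) \equiv (0,0)$, $m(t) = (t, t)$: directly $Rm(t) = (1+a_1,\, 1+a_2)\,t = -f(t)$, so $g = f + Rm$, and since $g \equiv 0$ condition (4) of Definition \ref{intro-SP} is satisfied vacuously. A second solution (sliding along the face $D_1$) is $g(t) = (0,\, (a_1 a_2 - 1)\,t)$, $m(t) = ((1+a_1)\,t,\, 0)$: one checks $g_1 \equiv 0$ and $g_2 = -(1+a_2)\,t + a_2(1+a_1)\,t = (a_1 a_2 - 1)\,t \ge 0$, so $g$ takes values in $D$; moreover $m_2 \equiv 0$ and $m_1$ grows only while $g_1 = 0$. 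Since $a_1 a_2 > 1$, these two solutions disagree for every $t > 0$. (By symmetry a third solution slides along $D_2$, but two already suffice.)

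Geometrically, the reason the construction works is that at the corner the reflection cone is wide enough to contain the direction $u$ in its interior, leaving room for the process either to stay at the corner — pushing on both faces simultaneously, each at a positive rate — or to leave along either face. This is essentially the picture behind the stochastic counterexamples in \cite{BaBu}; there it is subtler because one must produce two solutions adapted to the same Brownian path, whereas in the deterministic problem we are free to choose $f$. There is no real technical obstacle here: once $u$ is chosen inside the cone, the verification is elementary linear algebra. The construction uses the strict inequality $a_1 a_2 > 1$ in a crucial way — at $a_1 a_2 = 1$ the cone collapses to a ray, and, consistently with Theorem \ref{T1-uniq}, uniqueness is restored.
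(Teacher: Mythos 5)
Your proof is correct, and every step checks out: with $f(t)=-(1+a_1,1+a_2)t$ one has $Rm(t)=((1+a_1)t,(1+a_2)t)$ for $m(t)=(t,t)$, giving the solution $g\equiv 0$, while $m(t)=((1+a_1)t,0)$ gives $Rm(t)=((1+a_1)t,a_2(1+a_1)t)$ and hence $g(t)=(0,(a_1a_2-1)t)$, which lies in $D$ precisely because $a_1a_2>1$; conditions (1)--(4) of Definition \ref{intro-SP} hold for both pairs, and the two $g$'s differ for all $t>0$. This is, however, a genuinely different and much more elementary route than the paper's. The paper first normalizes to $a_1=\gamma>1$, $a_2=1$ via Lemma \ref{L1-uniq} and then builds a piecewise-linear driving function out of infinitely many blocks on the dyadic intervals $(2^{-n},2^{-n+1}]$, producing two solutions that coincide only at $t=0$ and are pushed apart by the boundary dynamics while visiting the corner only at the initial instant; the geometric growth rate $\gamma^{n}$ forces the multiscale construction, and the whole scheme is modeled on the stochastic counterexamples of \cite{BaBu}. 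Your argument instead exploits the degeneracy at the corner itself: since the reflection cone spanned by the columns $v_1=(1,a_2)^T$ and $v_2=(a_1,1)^T$ is a nondegenerate two-dimensional sector when $a_1a_2>1$, the inward drift $-(v_1+v_2)$ can be compensated either by pushing on both faces (remaining at the origin) or by pushing on one face only (sliding up the other axis). What the paper's construction buys is a qualitatively stronger exhibit of non-uniqueness --- two solutions separated by genuine dynamics rather than by a choice made while sitting at the corner with both components of $m$ increasing simultaneously --- but the theorem as stated asks only for some continuous $f$ with two solutions, and for that your construction is a complete and correct proof. Your closing consistency checks (the second solution exits $D$ when $a_1a_2<1$, and yields the same $g$ with a different $m$ when $a_1a_2=1$, in agreement with Theorem \ref{T1-uniq} and Remark \ref{R1-uniq}) are also accurate.
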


The proofs of Theorems \ref{T1-uniq} and \ref{T2-non} are given in Section
\ref{sect-2d}.

\begin{remark}\label{diagonal-ones}
If $R$ is completely-$\calS$, then all the diagonal elements must be positive. If we let $\wt m_i(t)=R_{ii}m_i(t)$ and $\wt R_{ij}=R_{ij}/R_{jj}$
for $i,j=1, \ldots, d$, then the Skorokhod equation can be rewritten as
$g(t)=f(t)+\wt R \wt m(t)$. Note $\wt R_{ii}=1$ for each $i$. 
Therefore there is no loss of generality in assuming that the diagonal elements of $R$ are all equal to 1. The same remark holds for the 
stochastic Skorokhod problem.
\end{remark}

\medskip

\noindent \textbf{Acknowledgments.} We thank A.~Mandelbaum for providing
us with a copy of his unpublished paper. We also thank R.~Williams for numerous helpful discussions on the subject of this paper.

\section{The irreducible case}\label{sect-irred}

A $d\times d$ matrix $A$ is non-negative if each entry of $A$ is non-negative,
and we write $A\ge B$ if $A-B$ is non-negative. We denote the spectral radius
of $A$ by $\rho(A)$; this is the maximum of the moduli  of the eigenvalues of
$A$. Given a matrix $A$, we denote by $|A|$ the matrix  whose entries are
the absolute values of the corresponding entries of $A$, and similarly $A^+$ 
for the matrix whose entries are the positive parts of the corresponding entries
of $A$. We write $v>0$ for a $d$-dimensional vector $v$ if all the coordinates
are positive, $v\ge 0$ if all the coordinates are non-negative. We use $|v|$ for 
$(\sum_{i=1}^d |v_i|^2)^{1/2}$ and $\norm{A}=\sup\{|Av|: |v|\le 1\}$. Finally we use $A^T$ for the transpose of a matrix $A$.

A non-negative matrix $A$ is irreducible if for each $i,j$ there exist  $k_0, k_1, 
\ldots, k_n$ with $k_0=i$ and $k_n=j$ such that $A_{k_m,k_{m+1}}>0$ for $0\le m\le n-1$. The $k$'s and
$n$ can depend on $i$ and $j$. A number of equivalent definitions of 
irreducibility can be found in \cite[Theorem 6.2.24]{HJ}. If the row sums of 
$A$ are each equal to 1, then $A$ can be viewed as the transition matrix of
a Markov chain, and irreducibility of $A$ is the same as irreducibility in the
Markov chain context.

We need a few well-known facts about non-negative matrices.

\noindent (1) If $A\ge B\ge 0$, then 
 (\cite[Theorem 8.1.18]{HJ})
\begin{equation}\label{irred-bf1}
\rho(A)\ge \rho(B).
\end{equation}

\noindent (2) The Gelfand formula 
(\cite[Corollary 5.6.14]{HJ}):
\begin{equation}\label{irred-bf2}
\rho(A)=\lim_{m\to \infty}\norm{A^m}^{1/m}.
\end{equation}

\noindent (3) The Perron-Frobenius theorem
(\cite[Theorem 8.4.4]{HJ}): If $A\ge 0$ is irreducible, then
there exists a positive eigenvalue $r=\rho(A)$ and left and right positive
eigenvectors $y,x$ such that $y^TA=ry^T$ and $Ax=rx$.

\noindent (4) A corollary to the  Perron-Frobenius theorem
(\cite[Theorem 8.3.1]{HJ}): If $A\ge 0$, then
there exists a non-negative non-zero $x$
such that 
$Ax=\rho(A)x$.

Throughout the remainder of this section we assume $Q=I-R$ and $|Q|$
is irreducible with $\rho(|Q|)\le 1$.
Note that this implies that the diagonal elements of $Q$ are zero.

We need the following elementary lemma.

\begin{lemma} \label{add-identity}
If $C\ge 0$ and $\beta\ge0$, then 
$\rho(C+\beta I)=\rho(C)+\beta$.
\end{lemma}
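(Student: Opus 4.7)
The plan is to exploit the simple algebraic relationship between the spectra of $C$ and $C+\beta I$, together with the fact (stated as item (4) in the excerpt) that for any non-negative matrix $C$, the spectral radius $\rho(C)$ is itself an eigenvalue.

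First I would observe that if $\lambda \in \mathbb{C}$ is an eigenvalue of $C$ with eigenvector $v$, then $(C+\beta I)v = (\lambda + \beta)v$, so $\lambda + \beta$ is an eigenvalue of $C + \beta I$. Conversely, every eigenvalue of $C + \beta I$ arises this way. Hence
\[
\rho(C + \beta I) \;=\; \max\{|\lambda + \beta| : \lambda \in \operatorname{spec}(C)\}.
\]

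For the lower bound, I would apply fact (4) from the excerpt: since $C \ge 0$, there is a non-negative non-zero vector $x$ with $Cx = \rho(C) x$, so $\rho(C)$ is a genuine (real, non-negative) eigenvalue of $C$. Then $\rho(C) + \beta$ is an eigenvalue of $C + \beta I$, and since $\rho(C), \beta \ge 0$,
\[
\rho(C + \beta I) \;\ge\; |\rho(C) + \beta| \;=\; \rho(C) + \beta.
\]
For the upper bound, I would use the triangle inequality: for any eigenvalue $\lambda$ of $C$,
\[
|\lambda + \beta| \;\le\; |\lambda| + \beta \;\le\; \rho(C) + \beta,
\]
since $\beta \ge 0$ is real and $|\lambda| \le \rho(C)$ by definition. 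Taking the maximum over $\lambda$ gives $\rho(C + \beta I) \le \rho(C) + \beta$.

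This is genuinely elementary; there is no substantial obstacle. The only point that deserves care is that one really does need the existence of a non-negative eigenvalue equal to $\rho(C)$ (rather than just an eigenvalue of maximum modulus), because otherwise the lower-bound step would only yield $\rho(C + \beta I) \ge |\lambda_0 + \beta|$ for some complex $\lambda_0$ with $|\lambda_0| = \rho(C)$, and this need not equal $\rho(C) + \beta$. Fact (4), a corollary of Perron--Frobenius, supplies exactly what is needed.
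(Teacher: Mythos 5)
Your proof is correct. The lower bound is exactly the paper's argument: apply the Perron--Frobenius corollary (fact (4)) to $C$ to get a non-negative eigenvector with eigenvalue $\rho(C)$, and shift it by $\beta$. For the upper bound you diverge: you use the fact that the spectrum of $C+\beta I$ is precisely the spectrum of $C$ translated by $\beta$, and then the triangle inequality $|\lambda+\beta|\le|\lambda|+\beta$. The paper instead applies Perron--Frobenius a second time, to the non-negative matrix $C+\beta I$, producing a non-negative eigenvector $\wh x$ with $(C+\beta I)\wh x=\rho(C+\beta I)\wh x$, and reads off that $\rho(C+\beta I)-\beta$ is an eigenvalue of $C$, whence $\rho(C)\ge\rho(C+\beta I)-\beta$. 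Your route for this half is marginally more elementary and more general (the inequality $\rho(C+\beta I)\le\rho(C)+\beta$ holds for arbitrary complex matrices once $\beta\ge 0$, with no non-negativity needed), while the paper's is symmetric with its lower-bound step. Your closing remark correctly identifies the one genuinely non-trivial input: that $\rho(C)$ itself, and not merely some eigenvalue of maximal modulus, lies in the spectrum of $C$.
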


\begin{proof} By the corollary to the Perron-Frobenius theorem described in (4) above,  there exists an eigenvector
$x$ for $C$ such that $x\ge 0$, $x\ne 0$, and $Cx=\rho(C)x$. Then $(C+\beta I)x=\rho(C)x+\beta x$,
which implies that $C+\beta I$ has an eigenvalue of size 
$\rho(C)+\beta$, and consequently $\rho(C+\beta I)
\ge \rho(C)+\beta$. 

On the other hand,
$C+\beta I$ will also be non-negative, hence there exists an
eigenvector $\wh x$ for $C+\beta I$ such that $\wh x\ge 0$, $\wh x\ne 0$, 
and $(C+\beta I)\wh x=\rho(C+\beta I)\wh x$. Then 
$$C\wh x=(C+\beta I)\wh x-\beta\wh x=\rho(C+\beta I)
\wh x-\beta \wh x,$$
which implies that $C$ has an eigenvalue of size 
 $\rho(C+\beta I)-\beta$, hence $\rho(C)\ge \rho(C+\beta I)-\beta$.
\end{proof}

In the case $Q\ge 0$ we have the following.       

\begin{proposition}\label{irred-p2}
Suppose $Q\ge 0$, $R$ is completely-$\calS$, and $\rho(Q)\le 1$. Then $\rho(Q)<1$.
\end{proposition}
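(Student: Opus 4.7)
The plan is to argue by contradiction: assume $\rho(Q)=1$ and use the Perron--Frobenius theorem together with the $\calS$-matrix property of $R$ to produce an impossible strict inequality.

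First, I would invoke the standing hypothesis of the section that $|Q|$ is irreducible. Since $Q\ge 0$, we have $|Q|=Q$, so $Q$ itself is irreducible. Applying the Perron--Frobenius theorem (fact (3) on page preceding the proposition) to the non-negative irreducible matrix $Q$ with $r=\rho(Q)=1$, I obtain a strictly positive left eigenvector $y>0$ with $y^T Q = y^T$. Rearranging this gives
\[
y^T R = y^T(I-Q) = y^T - y^T = 0.
\]

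Next I would use the hypothesis that $R$ is completely-$\calS$, which in particular makes $R$ itself an $\calS$-matrix: there exists $z\ge 0$ with $Rz>0$. Since every coordinate of $y$ is strictly positive and every coordinate of $Rz$ is strictly positive,
\[
0 = (y^T R)z = y^T(Rz) = \sum_{i=1}^d y_i\,(Rz)_i > 0,
\]
a contradiction. Hence $\rho(Q)<1$.

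The main (very minor) obstacle is simply recognizing that the Perron--Frobenius theorem delivers a \emph{strictly positive} left eigenvector in the irreducible case, which is exactly what is needed to pair with the $\calS$-matrix vector $z$ and force strict positivity of the inner product $y^T(Rz)$. Lemma \ref{add-identity} is not required for this particular proposition; it presumably serves later in the section when $Q$ is not assumed non-negative and one needs to compare spectral radii after perturbing by a multiple of the identity.
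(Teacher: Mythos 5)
Your proof is correct and follows essentially the same route as the paper's: assume $\rho(Q)=1$, use Perron--Frobenius on the irreducible non-negative matrix $Q$ to get a strictly positive left eigenvector $y$ with $y^TR=0$, and derive a contradiction by pairing with the vector $z\ge 0$ satisfying $Rz>0$ from the completely-$\calS$ hypothesis. No issues.
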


\begin{proof}
Recall that we are assuming that $|Q|$ is irreducible in this section.
Suppose $\rho(Q)=1$. By the Perron-Frobenius theorem there exists
a left eigenvector $y>0$ such that $y^TQ=y^T$. Then
$y^TR=0$. Since $R$ is completely-$\calS$, there exists $x\ge 0$ such that
$Rx>0$. Now consider $y^TRx$. On the one hand this equals 0. On the other,
because $y>0$ and $Rx>0$, then $y^TRx>0$, a contradiction.
\end{proof}

\begin{lemma}\label{irred-L1}
Suppose $A\ge B\ge 0$, $A\ne B$, and $A$ is irreducible. Then $\rho(A)>\rho(B)$.
\end{lemma}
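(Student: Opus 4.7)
The plan is to combine the Perron--Frobenius statements (3) and (4) from the excerpt: let $y>0$ be a left Perron eigenvector of $A$ with $y^T A = \rho(A) y^T$, guaranteed by irreducibility of $A$, and let $x\ge 0$, $x\ne 0$ be a non-negative eigenvector of $B$ with $Bx=\rho(B)x$, provided by (4). The weak inequality $\rho(A)\ge\rho(B)$ is immediate: computing two ways,
\[
\rho(A)\,y^T x = y^T A x \ge y^T B x = \rho(B)\,y^T x,
\]
where the middle step uses $A\ge B$ and $x\ge 0$ entrywise. Because $y>0$ and $x$ is non-negative and non-zero, $y^T x>0$, so $\rho(A)\ge\rho(B)$. (This is already \eqref{irred-bf1}; the real task is strict inequality.)

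To promote this to a strict inequality I would argue by contradiction. Suppose $\rho(A)=\rho(B)$. Then $y^T(A-B)x=0$. Since $A-B\ge 0$ and $x\ge 0$, the vector $(A-B)x$ is non-negative; and since each $y_i>0$, the equality $\sum_i y_i\,[(A-B)x]_i=0$ forces $(A-B)x=0$. Therefore $Ax=Bx=\rho(B)x=\rho(A)x$, so $x$ is a non-negative, non-zero eigenvector of $A$ corresponding to $\rho(A)$.

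The crux of the argument is then to upgrade this $x$ to a \emph{strictly} positive eigenvector, which is where irreducibility of $A$ is essential. I would set $S=\{j:x_j>0\}$; this set is non-empty since $x\ne 0$. Assume for contradiction that $S\ne\{1,\dots,d\}$. For each $j\notin S$, $0=\rho(A)x_j=(Ax)_j=\sum_k A_{jk}x_k$, and every term is non-negative, so $A_{jk}=0$ whenever $j\notin S$ and $k\in S$. Now pick any $i\notin S$ and any $j\in S$; by irreducibility there is a sequence $i=k_0,k_1,\dots,k_n=j$ with $A_{k_m,k_{m+1}}>0$. This path starts in $S^c$ and ends in $S$, so it must contain an index $m$ with $k_m\in S^c$ and $k_{m+1}\in S$, giving $A_{k_m,k_{m+1}}>0$ with $k_m\notin S$, $k_{m+1}\in S$, contradicting what we just established. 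Hence $x>0$.

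Once we have $x>0$ and $(A-B)x=0$, the identity $\sum_k(A-B)_{jk}x_k=0$ with all terms non-negative forces $(A-B)_{jk}=0$ for every $j,k$, i.e.\ $A=B$, contradicting the hypothesis $A\ne B$. Therefore $\rho(A)>\rho(B)$. I expect the main obstacle to be the step that uses irreducibility to force $x>0$; the rest is bookkeeping with the Perron--Frobenius facts already collected.
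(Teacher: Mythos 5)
Your proof is correct, but it follows a genuinely different route from the paper's. You run the classical Perron--Frobenius duality argument: pair the positive \emph{left} eigenvector $y$ of $A$ (available by irreducibility, fact (3)) against a non-negative \emph{right} eigenvector $x$ of $B$ (fact (4)), read off $\rho(A)\ge\rho(B)$ from $y^TAx\ge y^TBx$, and then, assuming equality, force $(A-B)x=0$, upgrade $x$ to a strictly positive vector via the support argument (the only place irreducibility of $A$ enters for you), and conclude $A=B$. Every step checks out, including the support argument: $A_{jk}=0$ for $j\notin S$, $k\in S$ contradicts the existence of a path from $S^c$ into $S$. The paper instead argues combinatorially on matrix powers: it locates an entry where $A_{i_0j_0}>B_{i_0j_0}$, uses irreducibility to route a cycle through $(i_0,j_0)$ from each index $i$ back to itself, deduces $A^{n(i)}_{ii}>B^{n(i)}_{ii}$, passes to a common power $N$ to get $A^N\ge B^N+\eps I$, and finishes with Lemma \ref{add-identity} and the Gelfand formula \eqref{irred-bf2}. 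Your version is shorter and more conceptual, leaning on the full strength of the Perron--Frobenius theorem (positive left eigenvector); the paper's version trades that for a longer but more hands-on computation that only invokes the weaker fact (4) through Lemma \ref{add-identity}, plus monotonicity \eqref{irred-bf1} and the Gelfand formula. Both are legitimate, and both use irreducibility in an essential way, just at different points.
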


\begin{proof}
Since $A\ge B$ but $A\ne B$, there exist $i_0,j_0$ such that $A_{i_0j_0}
> B_{i_0j_0}$. Fix $i\in \{1, \ldots, d\}$ for the moment. There exists $n_1(i)$
such that $A^{n_1(i)}_{ii_0}>0$ and $n_2(i)$ such that $A^{n_2(i)}_{j_0i}>0$. 
Set $n(i)=n_1(i)+n_2(i)+1$. 
Therefore
\begin{equation}\label{irr-E1}
A^{n_1(i)}_{ii_0}A_{i_0j_0}A^{n_2(i)}_{j_0i} 
>B^{n_1(i)}_{ii_0}B_{i_0j_0}B^{n_2(i)}_{j_0i}.
\end{equation}  
Since
$$A^{n(i)}_{ii}=\sum A^{n_1(i)}_{ik_1}A_{k_1k_2} A^{n_2(i)}_{k_2i},$$
where the sum is over all indices $k_1, k_2 
\in \{1, \ldots, d\}$  and a 
similar expression holds for $B^{n(i)}_{ii}$, using \eqref{irr-E1} we conclude
$A^{n(i)}_{ii}>B^{n(i)}_{ii}$. 

Suppose  $K$ is a positive integer. Then
$(A^{n(i)}_{ii})^K > (B^{n(i)}_{ii})^K$.
Observe that
\begin{align}\label{j16.1}
A^{Kn(i)}_{ii}=\sum A^{n(i)}_{ik_1}A^{n(i)}_{k_1k_2}\cdots 
A^{n(i)}_{k_{K-2},k_{K-1}} A^{n(i)}_{k_{K-1}i}
\end{align}
where the sum is over all $k_1, \ldots, k_{K-1}\in \{1, \ldots, d\}$
and a similar expression holds for $B^{Kn(i)}_{ii}$.
Each summand in \eqref{j16.1} is greater than or equal to the corresponding   summand for $B$. However the summand where we have $k_1=k_2=…=k_{K-1}=i$ is equal to $(A^{(n(i)}_{ii})^K$, and this is strictly larger than  $(B^{(n(i)}_{ii})^K$. 
It follows that
$A^{Kn(i)}_{ii}> B^{Kn(i)}_{ii}$.

Let $N$ be the least common multiple of $\{n(1), \ldots, n(d)\}$. Then
$A^N_{ii}>B^N_{ii}$ for each $i$, and hence there exists $\eps>0$ such that 
$$A^N\ge B^N+ \eps I.$$
Therefore $\rho(A^N)\ge \rho(B^N+\eps I)= \rho(B^N)+\eps$ by \eqref{irred-bf1} and
Lemma \ref{add-identity}.

Finally, by \eqref{irred-bf2},
$$\rho(A^N)=\lim_{m\to \infty} \norm{A^{Nm}}^{1/m}
=\lim_{m\to \infty}( \norm{A^{Nm}}^{1/Nm})^N=\rho(A)^N,$$
and similarly for $B$. Our result now follows.
\end{proof}

Fix $t_0>0$.
Let $\calC_d[0,t_0]$ be the set of functions $f$ mapping $[0,t_0]$ to $\R^d$ such that each component of $f$ is continuous.
Let $\calC^{*}_d[0,t_0]$ be the set of functions $f$ in $\calC_d[0,t_0]$
such that each component of $f$ is non-decreasing.

Let $f$ be a fixed element of $\calC_{d}[0,t_0]$ with $f(0)\ge 0$.
We define the non-linear operator $T$ on $\calC_d[0,t_0]$ by
\begin{equation}\label{def-T}
Tg(t)=\sup_{s\le t} \Big(Qg(s)-f(s)\Big)^+.
\end{equation}
We define the non-linear operator $U$ on $\calC_d[0,t_0]$ by
\begin{equation}\label{def-U}
Ug(t)=\tfrac12 g(t)+\tfrac12 Tg(t).
\end{equation}
We denote the iterates of $U$ by $U^2g=U(Ug)$, etc.

\begin{theorem}\label{irred-T1}
Suppose  $R$ is completely-$\calS$.
Let $Q=I-R$. 
Suppose either (a) $Q=0$ or (b) $|Q|$ is irreducible and $\rho(|Q|)\le 1$. 

\noindent (i) There exists a constant $C_1$ such that if $h\in \calC_d^*[0,t_0]$ and $h(0)=0$, then
\begin{align}\label{j23.1}
|Th(t)-Th(s)|\le C_1\Big(|h(t)-h(s)|+\sup_{s\le u_1<u_2\le t}
|f(u_2)-f(u_1)|\Big).
\end{align}

\noindent(ii) Suppose $g(0)=0$ and $g\in \calC_d^*[0,t_0]$.  For all $n\ge 1$
we have $U^ng\in \calC_d^*[0,t_0]$ and $U^ng(0)=0$. Moreover
there exists a constant $C_2$ not depending on $n$ such that for all
$0\le s<t\le t_0$ and all $n\ge 1$  we have
\begin{align*}
|U^ng(t)-U^ng(s)|\le C_2\Big(|g(t)-g(s)|+\sup_{s\le u_1<u_2\le t}
|f(u_2)-f(u_1)|\Big).
\end{align*}

\end{theorem}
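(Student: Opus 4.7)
The plan is to prove (i) by a direct componentwise computation that exploits the monotonicity of $h$, and then to obtain (ii) by iterating the resulting inequality applied to $U^{n-1}g$ through a matrix $A$ whose spectral radius is strictly less than one.

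For (i), I would fix $0\le s<t\le t_0$ and work componentwise. Set $X_i(u)=(Qh(u)-f(u))_i$, so $(Th(u))_i=\sup_{v\le u}X_i(v)^+$ is non-decreasing in $u$. If the $i$-th component strictly increases from $s$ to $t$, the outer supremum at time $t$ must be attained at some $u^*\in[s,t]$; combining this with $\sup_{v\le s}X_i(v)^+\ge X_i(s)^+$ and the elementary inequality $a^+-b^+\le(a-b)^+$ gives
\[
0\le (Th(t))_i-(Th(s))_i \le (X_i(u^*)-X_i(s))^+ \le (Q(h(u^*)-h(s)))_i^+ + |f_i(u^*)-f_i(s)|.
\]
Writing $Q=Q^+-Q^-$, dropping the non-negative term $(Q^-(h(u^*)-h(s)))_i$, and using $0\le h(u^*)-h(s)\le h(t)-h(s)$ componentwise (which is where the monotonicity of $h$ enters), one obtains the vector-valued bound
\[
0\le Th(t)-Th(s)\le Q^+(h(t)-h(s))+\bone\,F(s,t),
\]
where $F(s,t):=\sup_{s\le u_1<u_2\le t}|f(u_2)-f(u_1)|$ and $\bone$ is the all-ones vector. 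Taking Euclidean norms then yields (i) with $C_1$ depending only on $\|Q^+\|$ and $d$.

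For (ii), a short induction shows $U^n g\in\calC_d^*[0,t_0]$ with $U^n g(0)=0$: $TU^{n-1}g$ is non-decreasing as a componentwise supremum, while $TU^{n-1}g(0)=(QU^{n-1}g(0)-f(0))^+=(-f(0))^+=0$ since $f(0)\ge 0$, and so the convex combination $U^n g$ inherits both properties. Setting $v_n:=U^n g(t)-U^n g(s)\ge 0$ and applying the vector bound from (i) with $h=U^{n-1}g$ yields
\[
v_n \le A v_{n-1}+b, \qquad A:=\tfrac{1}{2}(I+Q^+), \qquad b:=\tfrac{1}{2}\bone\,F(s,t).
\]
Since $A$ has non-negative entries, iteration gives $v_n\le A^n v_0+\bigl(\sum_{k=0}^{n-1}A^k\bigr)b$ componentwise, with $v_0=g(t)-g(s)$. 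The main obstacle is obtaining a bound uniform in $n$ in the critical regime $\rho(|Q|)=1$, which reduces to showing $\rho(A)<1$, equivalently $\rho(Q^+)<1$ by Lemma \ref{add-identity}. The crucial point is that (i) uses $Q^+$, not $|Q|$: if $Q\ge 0$ then $Q^+=Q$ and Proposition \ref{irred-p2} gives $\rho(Q)<1$; otherwise $|Q|\ge Q^+$ with at least one strictly larger entry, and irreducibility of $|Q|$ together with Lemma \ref{irred-L1} yields $\rho(Q^+)<\rho(|Q|)\le 1$. Since $0\le M\le N$ componentwise implies $\|M\|_2\le\|N\|_2$ for non-negative matrices (as $|Mx|\le N|x|$ componentwise), both $\|A^n\|$ and $\|\sum_{k=0}^{n-1}A^k\|\le\|(I-A)^{-1}\|$ are bounded uniformly in $n$, delivering (ii). Case (a), $Q=0$, is immediate since then $Tg$ does not depend on $g$ and $U^n g=2^{-n}g+(1-2^{-n})Tg$.
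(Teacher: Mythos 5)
Your proposal is correct and follows essentially the same route as the paper: the componentwise bound on $Th(t)-Th(s)$ that uses monotonicity of $h$ to replace $Q$ by $Q^+$, the recursion through $P=\tfrac12(I+Q^+)$, and the verification that $\rho(Q^+)<1$ via Proposition \ref{irred-p2} (when $Q\ge 0$) or Lemma \ref{irred-L1} (otherwise) together with Lemma \ref{add-identity}. The only cosmetic difference is at the very end, where you bound the iterates uniformly by comparing the partial sums with the Neumann series $(I-A)^{-1}$, whereas the paper invokes Gelfand's formula and writes $n=kM+j$; both are valid.
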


\begin{proof}

(i) 
Suppose $h(0)=0$ and $h\in \calC_d^*[0,t_0]$.
Let 
\begin{align*}
F_i(s,t)=\sup_{s\le u_1<u_2\le t} |f_i(u_2)-f_i(u_1)|.
\end{align*}
Writing the formula for $Th$ in terms of coordinates, we have
\begin{equation}\label{T-coord}
(Th)_j(t)=\sup_{r\le t} \Big(\sum_{k=1}^d Q_{jk}h_k(r) -f_j(r)\Big)^+.
\end{equation}
If $w$ is a non-negative function and $0\leq s \leq t$, then
$$\sup_{r\le t} w(r)\le \sup_{r\le s} w(r)+\sup_{s\le r\le t}(w(r)-w(s)).$$
This, \eqref{T-coord} and the inequality $(a+b)^+\le a^+ + b^+$ imply that
$$(Th)_j(t)-(Th)_j(s)\le \sup_{s\le r\le t}(Q(h_k(r)-h_k(s)))_j+F_j(s,t).$$
All the diagonal entries of $Q$ are 0. 
If  $Q_{jk}<0$, then  $r\to Q_{jk}h_{k}(r)$ 
is non-increasing. If  $Q_{jk}\geq 0$, then
$\sup_{s\le r\le t}(Q(h_k(r)-h_k(s)))_j
= (Q(h_k(t)-h_k(s)))_j$.
Therefore
\begin{equation}\label{Th-bound}
(Th)_j(t)-(Th)_j(s)\le \sum_{k=1}^d Q^+_{jk}(h_k(t)-h_k(s))+F_j(s,t).
\end{equation}
 This implies \eqref{j23.1} because $Q^+$ is a fixed matrix not depending on $h$.

(ii) It is easy to see that for each $n$, we have $U^ng(0)=0$, $U^ng$ is non-decreasing, and $U^ng$ is continuous.

In matrix terms, \eqref{Th-bound} can be written
\begin{equation}\label{T-diff}
Th(t)-Th(s)\le Q^+(h(t)-h(s))+F(s,t).
\end{equation}
Let
$$P=\tfrac12 Q^+ + \tfrac12 I.$$
Since $h\in \calC^*_d[0,t_0]$,
\begin{equation}\label{U-diff}
Uh(t)-Uh(s)\le P(h(t)-h(s))+\tfrac12 F(s,t).
\end{equation}

Applying this to $h=U^ng$ we obtain
$$U^{n+1}g(t)-U^{n+1}g(s)\le P(U^ng(t)-U^ng(s))+\tfrac12 F(s,t).$$
An induction argument shows that
\begin{equation}\label{T-matrix}
U^ng(t)-U^ng(s)\le P^n(g(t)-g(s))+\tfrac12 \sum_{i=1}^n P^{i-1}F(s,t).
\end{equation}

In the case that $Q\ge 0$, we have by Proposition \ref{irred-p2} that
$\rho(Q^+)=\rho(Q)<1$.
In the case where $Q$ has at least one negative entry we use
Lemma \ref{irred-L1} with $A=|Q|$ and $B=Q^+$ to see that $\rho(Q^+)<
\rho(|Q|)\le 1$.
By Lemma \ref{add-identity} we have $\rho(P)<1$.
By \eqref{irred-bf2} there exists $M$ such that $\norm{P^M}<1$.
For any $n$ we can write $n=kM+j$, where $k\ge 0$ and $0\le j<M$. Let
$\kappa = \norm{P^M}$ and $\lambda=\norm{P}$. 
Then \eqref{T-matrix} yields
\begin{equation}\label{T-equicontinuity}
U^ng(t)-U^ng(s)\le \kappa^k \lambda^j|g(t)-g(s)|
+\tfrac12 \Big(\sum_{k=0}^\infty \kappa^k\Big) \Big(\sum_{j=0}^{M-1} \lambda^j\Big)\, |F(s,t)|.
\end{equation}
The sum over $k$ is finite because $\kappa<1$. It may be that $\lambda>1$, but the
sum over $j$ is a finite sum. The desired uniform bound on $U^ng(t)-U^ng(s)$
thus follows from \eqref{T-equicontinuity}.
\end{proof}
 
Next we define another norm on $\calC_d[0,t_0]$. Suppose first that
$Q\ne 0$. Since $|Q|$ is non-negative and irreducible,
by the Perron-Frobenius theorem there exists $y>0$ such that $y^T|Q|=ry^T$,
where $0<r=\rho(|Q|)\le 1$.

Define
\begin{equation}\label{def-norm}
|f|_*=\sup_{s\le t_0}\sum_{i=1}^d y_i|f_i(s)|.
\end{equation}
Since $y_i>0$ for each $i$, this norm is equivalent to  the norm 
$$|f|=\sup_{s\le t_0}\sum_{i=1}^d |f_i(s)|.$$
If $Q=0$, let $|f|_*=\sup_{s\le t_0} |f(s)|$.

We show $T$ is a non-expansive map with respect to $|\,\cdot\,|_*$.

\begin{proposition}\label{irred-P3}
Suppose $g\in \calC_d[0,t_0]$, either (i) $Q=0$ or (ii) $|Q|$ is irreducible, and
$r=\rho(|Q|)\le 1$.
Then $T$ is a non-expansive map:
if $g_1,g_2\in \calC_d[0,t_0]$, then
$$|Tg_1-Tg_2|_*\le |g_1-g_2|_*.$$
\end{proposition}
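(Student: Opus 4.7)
The plan is to prove the inequality coordinate by coordinate, use the elementary bounds $|\sup_s a(s) - \sup_s b(s)| \le \sup_s |a(s)-b(s)|$ and $|a^+ - b^+| \le |a-b|$, and then exploit the fact that $y$ is a left eigenvector of $|Q|$ with eigenvalue $r \le 1$ to collapse the weighted sum.

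First I would dispose of the case $Q=0$: then $Tg(t) = \sup_{s\le t}(-f(s))^+$ depends only on $f$, so $Tg_1 = Tg_2$ and the conclusion is immediate.

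Now assume $|Q|$ is irreducible. Fix a coordinate $j$ and $t \le t_0$. From \eqref{def-T},
\begin{align*}
\bigl|(Tg_1)_j(t) - (Tg_2)_j(t)\bigr|
&\le \sup_{s \le t} \Bigl| \bigl(Q g_1(s) - f(s)\bigr)_j^+ - \bigl(Q g_2(s) - f(s)\bigr)_j^+ \Bigr| \\
&\le \sup_{s \le t} \bigl| \bigl(Q(g_1(s) - g_2(s))\bigr)_j \bigr|
\le \sup_{s\le t} \sum_{k=1}^d |Q_{jk}|\,|g_{1,k}(s) - g_{2,k}(s)|.
\end{align*}
Multiplying by $y_j > 0$ and summing over $j$, then interchanging the sum with the supremum (which only increases the right side), I get
\begin{align*}
\sum_{j=1}^d y_j \bigl|(Tg_1)_j(t) - (Tg_2)_j(t)\bigr|
&\le \sup_{s \le t} \sum_{k=1}^d \Bigl(\sum_{j=1}^d y_j |Q_{jk}|\Bigr) |g_{1,k}(s) - g_{2,k}(s)| \\
&= \sup_{s \le t} \sum_{k=1}^d r\, y_k\, |g_{1,k}(s) - g_{2,k}(s)|
\le r\,|g_1 - g_2|_*,
\end{align*}
where the equality uses the left eigenvector identity $y^T|Q| = r y^T$. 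Taking the supremum over $t \le t_0$ on the left and invoking $r \le 1$ yields $|Tg_1 - Tg_2|_* \le r|g_1-g_2|_* \le |g_1-g_2|_*$.

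The whole argument is essentially routine; the only conceptual step is the choice of the weighted norm $|\cdot|_*$ made precisely so that the left Perron--Frobenius eigenvector $y$ turns the double sum into $r$ times the corresponding sum in $g_1 - g_2$. I do not anticipate any real obstacle — the positivity of $y$ guarantees the norm is equivalent to the usual one, and the coordinatewise bound together with $y^T|Q| = r y^T$ does all the work.
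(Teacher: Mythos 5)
Your strategy is the same as the paper's: bound the coordinatewise difference of $Tg_1$ and $Tg_2$ by a running supremum of $|Q(g_1-g_2)|$ and then use the left Perron--Frobenius eigenvector to collapse the weighted sum. The first three inequalities (the bounds $|\sup a-\sup b|\le\sup|a-b|$ and $|a^+-b^+|\le|a-b|$, giving $|(Tg_1)_j(t)-(Tg_2)_j(t)|\le\sup_{s\le t}\sum_k|Q_{jk}|\,|g_{1,k}(s)-g_{2,k}(s)|$) are fine. The gap is the step where you ``interchange the sum with the supremum'': you pass from $\sum_j y_j\sup_{s\le t}(\cdots)_j$ to $\sup_{s\le t}\sum_j y_j(\cdots)_j$, and this inequality goes the wrong way. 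For non-negative functions one always has $\sum_j\sup_s a_j(s)\ge\sup_s\sum_j a_j(s)$, with strict inequality whenever the suprema are attained at different times, so your displayed bound does not follow from the coordinatewise one. This is not cosmetic: with the norm exactly as in \eqref{def-norm} (supremum outside the weighted sum) the asserted conclusion can fail when $r=1$. Take $d=2$, $Q_{12}=Q_{21}=1$, $Q_{11}=Q_{22}=0$ (so $y=(1,1)$, $r=1$), $f\equiv 0$, $g_2\equiv 0$, and $g_1=(\phi,\psi)$ with $\phi,\psi\ge 0$ continuous bumps of height $1$ having disjoint supports; then $|g_1-g_2|_*=1$, while $Tg_1(t_0)=(1,1)$ and $Tg_2\equiv 0$, so $|Tg_1-Tg_2|_*=2$.

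For comparison, the paper's proof correctly establishes $|Q(g_1-g_2)|_*\le r|g_1-g_2|_*$ (there the supremum sits outside the entire double sum, so no interchange is needed) and then compresses the remaining step into ``it follows readily from this and the definition of $T$''; your more explicit version exposes that this remaining step is exactly where the delicate interchange lives, since the coordinatewise running-maximum operation is non-expansive in each coordinate's sup norm but not automatically in a norm that sums over coordinates before taking the supremum. The argument becomes airtight if one works with the equivalent norm $\sum_i y_i\sup_{s\le t_0}|f_i(s)|$ (supremum inside the weighted sum): then your chain reads $\sum_j y_j\sup_s\sum_k|Q_{jk}|\,|v_k(s)|\le\sum_{j,k}y_j|Q_{jk}|\sup_s|v_k(s)|=r\sum_k y_k\sup_s|v_k(s)|$, with every interchange now going in the right direction. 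So the idea is the right one, but as written the key inequality is asserted in the false direction; you should either switch to the sum-of-suprema norm or restrict the class of inputs and give a different argument.
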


\begin{proof}

Suppose $Q\ne 0$. 
If $g\in \calC_d[0,t_0]$, then 
\begin{align*}
|Qg|_*&= \sup_{s\le t_0} \sum_{i,j=1}^d y_i|Q_{ij}\,g_j(s)|
\le \sup_{s\le t_0} \sum_{j=1}^d (y^T|Q|)_j |g_j(s)|\\
&=\sup_{s\le t_0} \sum_{j=1}^d ry_j|g_j(s)|\le |g|_*
\end{align*}
since $r\le 1$. Then
$$|Qg_1-Qg_2|_*=|Q(g_1-g_2)|_*\le |g_1-g_2|_*.$$
It follows readily from this and the definition of $T$ that $T$ is 
non-expansive.

In the case $Q=0$, we see that $|Qg_1-Qg_2|_*=0$ and again $T$ is non-expansive.
\end{proof}

A key result of this section is the following.

\begin{theorem}\label{irred-T2}
Suppose $Q=I-R$ and either (i) $Q=0$ or (ii) $|Q|$ is irreducible with
$\rho(|Q|)\le 1$.  Suppose $f\in \calC_d[0,t_0]$ with $f(0)\ge 0$.
Let $g^{(0)}$ be the identically 0 function.
For $k\ge 0$ define
$$g^{(k+1)}=\tfrac12\Big((g^{(k)}+T(g^{(k)})\Big).$$
Then $g^{(k)}$ converges with respect to $|\,\cdot\,|_*$ to a function
$g^{(\infty)}\in \calC_d^*[0,t_0]$. Moreover $g^{(\infty)}$ is a fixed point
for $T$, that is, $Tg^{(\infty)}=g^{(\infty)}$.
\end{theorem}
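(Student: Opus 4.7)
The plan is to combine a compactness argument based on Theorem \ref{irred-T1}(ii) with the fixed-point iteration theorem of Ishikawa \cite{Ishikawa} flagged in the introduction.

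First I would establish uniform equicontinuity and boundedness of $g^{(n)}:=U^n g^{(0)}$. By Theorem \ref{irred-T1}(ii), each $g^{(n)}\in\calC_d^*[0,t_0]$ with $g^{(n)}(0)=0$; and specializing the same theorem's bound to the zero function $g^{(0)}\equiv 0$ kills the first term on the right, giving
\[ |g^{(n)}(t)-g^{(n)}(s)|\le C_2\sup_{s\le u_1<u_2\le t}|f(u_2)-f(u_1)|\]
uniformly in $n$. Continuity of $f$ on $[0,t_0]$ turns this into a genuine modulus of continuity, and since $g^{(n)}(0)=0$ the iterates are also uniformly bounded. Arzel\`a--Ascoli then produces a subsequence $g^{(n_k)}$ converging in sup norm --- which, by the equivalence of $|\cdot|_*$ with the sup norm noted after \eqref{def-norm}, is the same as convergence in $|\cdot|_*$ --- to a function $g^{(\infty)}$ that is continuous, non-decreasing, and vanishes at $0$, so $g^{(\infty)}\in\calC_d^*[0,t_0]$.

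Next I would apply Ishikawa's theorem. Proposition \ref{irred-P3} states that $T$ is non-expansive on the Banach space $(\calC_d[0,t_0],|\cdot|_*)$, and the recursion $g^{(n+1)}=\tfrac12 g^{(n)}+\tfrac12 Tg^{(n)}$ is the Mann iteration for $T$ with constant weight $\tfrac12$. The uniform boundedness established above verifies the boundedness hypothesis of Ishikawa's theorem, whose conclusion gives $|Tg^{(n)}-g^{(n)}|_*\to 0$. Non-expansiveness of $T$ also implies $Tg^{(n_k)}\to Tg^{(\infty)}$ along the convergent subsequence, and combining these two facts yields $Tg^{(\infty)}=g^{(\infty)}$.

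To promote subsequential convergence to convergence of the whole sequence, note that since $g^{(\infty)}$ is a fixed point of both $T$ and $U$,
\[ |g^{(n+1)}-g^{(\infty)}|_*=|Ug^{(n)}-Ug^{(\infty)}|_*\le\tfrac12|g^{(n)}-g^{(\infty)}|_*+\tfrac12|Tg^{(n)}-Tg^{(\infty)}|_*\le|g^{(n)}-g^{(\infty)}|_*, \]
so $n\mapsto|g^{(n)}-g^{(\infty)}|_*$ is monotone decreasing; because a subsequence tends to $0$, the whole sequence does. The main obstacle is the correct invocation of Ishikawa's theorem: non-expansiveness of $T$ alone does not force the Mann iterates to remain bounded, so Theorem \ref{irred-T1}(ii) is essential both for boundedness and for the Arzel\`a--Ascoli step that identifies a candidate fixed point; after that, the argument is a standard non-expansive-iteration package.
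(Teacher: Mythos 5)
Your proof is correct and follows essentially the same route as the paper: uniform equicontinuity of the iterates from Theorem \ref{irred-T1}(ii), compactness via Arzel\`a--Ascoli, non-expansiveness from Proposition \ref{irred-P3}, and Ishikawa's theorem with $t_n=\tfrac12$. The only difference is that the paper invokes Ishikawa's Theorem 1 as a black box (which already yields convergence of the full sequence to a fixed point once one knows $T$ is non-expansive and maps the compact set $\calD$ into a compact set), whereas you use only the asymptotic-regularity conclusion $|Tg^{(n)}-g^{(n)}|_*\to 0$ and then supply the subsequence-extraction and Fej\'er-monotonicity steps by hand, in effect re-proving that portion of Ishikawa's result.
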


\begin{proof}
Since the norms $|\,\cdot\,|$ and $|\,\cdot\,|_*$ are equivalent, sets that are compact
with respect to $|\,\cdot\,|$ are compact with respect to $|\,\cdot\,|_*$ and
vice versa.
Let $\calD$ be the set of functions $h\in\calC_d^*[0,t_0]$ 
such that $h(0)=0$ and 
\begin{align*}
|h(t)-h(s)|\le C_2\Big(\sup_{s\le u_1<u_2\le t}
|f(u_2)-f(u_1)|\Big),
\end{align*}
whenever $0\le s\le t\le t_0$,
where $C_2$ is the constant in Theorem \ref{irred-T1} (ii). 
By the Ascoli-Arzel\`a theorem,  $\calD$ is a compact subset of $\calC_d[0,t_0]$. 

Note that $g^{(k+1)}=Ug^{(k)}= U^k g^{0}$.
Using that $g^{(0)}$ is the zero function, 
Theorem \ref{irred-T1}(ii) tells us that 
$\{g^{(k)}: k\ge 0\}$ is contained in $\calD$.          
We observed in Proposition \ref{irred-P3} that $T$ is non-expansive
with respect to the norm 
$|\,\cdot\,|_*$.
By Theorem \ref{irred-T1}(i) and the Ascoli-Arzel\`a theorem, 
$T$ maps $\calD$ into a compact subset of 
$\calC_d[0,t_0]$.
We now apply a  theorem of Ishikawa (\cite[Theorem 1]{Ishikawa}) to  
obtain our result:
we take $t_n=\tfrac12$ for each $n$ in his theorem and his $D$ is our
$\calD$.
\end{proof}

Here is the result we will need in the next section.
We do not assume here that $Y$ is a Brownian motion.
We do require it to be a semimartingale so that $X$ will be a semimartingale.

\begin{theorem}\label{irred-T3}
Let $Y(t)$ be a semimartingale with trajectories in $\calC_d[0,t_0]$, adapted to a filtration
$\{\calF_t\}$, with $Y(0)\ge 0$.
Suppose either (i) $Q=0$ or (ii) $|Q|$ is irreducible and $\rho(|Q|)\le 1$. Then there exist processes
$X(t)$ taking values in $\calC_d[0,t_0]$
and $M(t)$ taking values in $\calC^*_d[0,t_0]$ such that $X(t)\ge 0$ and
$M(t)\ge 0$ for all $t$, both $X$ and $M$ are adapted to $\{\calF_t\}$,  
$M(0)=0$, 
$$X(t)=Y(t)+RM(t),$$
and for each $i$, $M_i(t)$ increases only at times when $X(t)\in D_i$, the
$i^{th}$ face of $D$.
\end{theorem}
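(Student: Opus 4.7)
The plan is to construct $(X,M)$ pathwise from $Y$ using the fixed point produced by Theorem \ref{irred-T2}, and then verify adaptedness and the Skorokhod conditions.

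First I would fix $\omega \in \Omega$ and apply Theorem \ref{irred-T2} with driving function $f(\cdot) = Y(\cdot,\omega) \in \calC_d[0,t_0]$ (which satisfies $f(0) \ge 0$). This yields a fixed point $M(\cdot,\omega) := g^{(\infty)} \in \calC_d^*[0,t_0]$ of the operator $T$, namely
\[ M(t,\omega) = \sup_{s \le t}\bigl(QM(s,\omega) - Y(s,\omega)\bigr)^+. \]
I would then define $X(t) = Y(t) + RM(t)$. The required regularity ($M$ continuous, non-decreasing, with $M(0)=0$, and $X$ continuous) is immediate from Theorem \ref{irred-T2}.

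Next I would check adaptedness. Each iterate $g^{(k)}$ in the construction is built by finitely many pointwise linear operations and a running supremum of $Qg^{(k-1)} - Y$ over $[0,t]$; an induction on $k$ shows that $g^{(k)}(t)$ is $\calF_t$-measurable. The $|\cdot|_*$-limit $g^{(\infty)}$ is then $\calF_t$-adapted, and hence so are $M$ and $X$.

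The heart of the verification is the Skorokhod reflection conditions. For non-negativity of $X$, the fixed point equation gives $M(t) \ge QM(t) - Y(t)$ componentwise, so $X(t) = Y(t) + RM(t) = Y(t) + M(t) - QM(t) \ge 0$. For condition (4), I would use the standard fact that for a continuous function $\varphi$, the Stieltjes measure of the running supremum $\Phi(t) = \sup_{s \le t} \varphi(s)$ is supported on $\{t : \Phi(t) = \varphi(t)\}$. Applied coordinatewise with $\varphi_i(t) = (QM(t) - Y(t))_i^+$, this shows that $dM_i$ is supported on $\{t : M_i(t) = (QM(t) - Y(t))_i^+\}$. On this set, either $M_i(t) = 0$ (and then $M_i \equiv 0$ on $[0,t]$ since $M_i$ is non-decreasing with $M_i(0)=0$, so this contributes no mass), or $M_i(t) > 0$ in which case $M_i(t) = (QM(t) - Y(t))_i$, which rearranges to $X_i(t) = Y_i(t) + M_i(t) - (QM(t))_i = 0$.

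The main obstacle is the last step: one must argue cleanly that the running-supremum characterization of $M_i$ really forces $X_i = 0$ on the support of $dM_i$, and dispose of the $M_i(t)=0$ case by the monotonicity of $M_i$. The semimartingale hypothesis on $Y$ plays no role in the existence construction itself; it is only needed so that $X = Y + RM$ is a semimartingale, which will be used in the next section. The fact that $T$ is the same nonlinear operator used in Theorem \ref{irred-T2} and only ``looks backward'' in time is what makes everything fit together measurably.
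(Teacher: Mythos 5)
Your proposal is correct and follows essentially the same route as the paper: apply Theorem \ref{irred-T2} pathwise with $f=Y(\cdot,\omega)$, get adaptedness by induction on the iterates, and read off $X\ge 0$ from the fixed-point identity $M(t)=\sup_{s\le t}\bigl(M(s)-X(s)\bigr)^+$. The only (cosmetic) difference is in the last step, where you invoke the standard fact that $d\Phi$ for $\Phi(t)=\sup_{s\le t}\varphi(s)$ is carried by $\{\Phi=\varphi\}$, whereas the paper proves exactly this by a direct $\eps$--$h$ contradiction at a putative point of increase of $M_i$ with $X_i>0$; both are valid and your handling of the $M_i(t)=0$ case is fine.
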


\begin{proof} 
Let $M^{(0)}(t)$ be identically 0 and for $k\ge 0$ define
$$M^{(k+1)}(t)(\omega)=\tfrac12\Big(M^{(k)}(t)(\omega)+(TM^{(k)})(t)(\omega)\Big),$$
where in the definition of $T$ given by \eqref{def-T} we replace $f(s)$ by
$Y(s)(\omega)$. Induction shows that each $M^{(k)}$ is adapted to 
$\{\calF_t\}$ and its components are non-decreasing. 

Since $Y(s)(\omega)$ is continuous almost surely, we apply Theorem 
\ref{irred-T2} for each $\omega$. Let $M(t)(\omega)$ be the limit 
of $M^k(t)(\omega)$ as $k\to \infty$.
Clearly $M(t)$ is also adapted to $\{\calF_t\}$ and its components are non-decreasing.

Let $$X(t)=Y(t)+RM(t).$$
The only parts of the theorem that are not easy consequences of Theorem
\ref{irred-T2} are that $X(t)\ge 0$ and that $M_i(t)$ increases only when
$X(t)\in D_i$.

Since $M$ is a fixed point for $T$, for each $\omega$ we have
\begin{align}
M(t)&=\sup_{s\le t}\Big[QM(s)-Y(s)\Big]^+=\sup_{s\le t}
\Big[-RM(s)+M(s)-Y_s\Big]^+\label{irred-T3E1}\\
&=\sup_{s\le t} \Big[M(s)-X(s)\Big]^+,\notag
\end{align}
using the definition of $X$.
Therefore $M(t)\ge M(t)-X(t)$, which implies $X(t)\ge 0$.

Suppose $M_i$ has a point of increase at a time $t_1$
yet $X_i(t_1)>0$; we will obtain a contradiction.
Let us suppose $t_1\in (0, t_0)$, the cases when $t_1=0$ or $t_1=t_0$ being similar.
By the continuity of $X_i$ there exist $h,\eps>0$ such that $t_1+h\le t_0$, $t_1-h\ge 0$,  
and $X_i(s)\ge \eps$ for $t_1-h\le s\le t_1+h$. Since $t_1$ is a point of
increase for $M_i$, we can take $\eps$ smaller if necessary so that
$M_i(t_1-h)\le M_i(t_1+h)-\eps$. 
If $s\le t_1-h$, then
$$M_i(s)-X_i(s)\le M_i(s)\le M_i(t_1-h)\le M_i(t_1+h)-\eps.$$
If $t_1-h\le s\le t_1+h$,
$$M_i(s)-X_i(s)\le M_i(s)-\eps\le M_i(t_1+h)-\eps.$$
But then 
$$\sup_{s\le t_1+h} (M_i(s)-X_i(s))\le M_i(t_1+h)-\eps,$$
which contradicts
\eqref{irred-T3E1} with $t$ replaced by $t_1+h$.
\end{proof}

\section{Pathwise uniqueness}\label{sect-general}

The reader who is only interested in the irreducible case may at this point 
jump to the proof of Theorem \ref{main-theorem} at the end of this section.

In this section we consider the case where $Q$ might not be irreducible, and
reduce it to the irreducible case by a decomposition of $R$.

Let $1\le J\le d$. Let $I_J$ be the $J\times J$ identity matrix.
We start with an elementary lemma.

\begin{lemma}\label{general-L1}
Suppose $A\ge 0$ and $A_0$ is a principal submatrix of $A$. Then 
$\rho(A_0)\le \rho(A)$.
\end{lemma}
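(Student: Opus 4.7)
The plan is to embed $A_0$ inside a $d \times d$ non-negative matrix that is dominated entrywise by $A$, and then apply fact \eqref{irred-bf1}.

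First I would fix notation: say $A$ is $d \times d$ and $A_0$ is the principal submatrix obtained by keeping the rows and columns indexed by some subset $S \subseteq \{1, \ldots, d\}$. Define a $d \times d$ matrix $B$ by
\[
B_{ij} = \begin{cases} A_{ij}, & i,j \in S, \\ 0, & \text{otherwise.} \end{cases}
\]
Since $A \ge 0$, every entry of $B$ is either zero or equal to the corresponding entry of $A$, so $0 \le B \le A$. By fact \eqref{irred-bf1}, $\rho(B) \le \rho(A)$.

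Next I would compute $\rho(B)$. After a permutation similarity that lists the indices in $S$ first, $B$ takes the block-diagonal form
\[
\begin{pmatrix} A_0 & 0 \\ 0 & 0 \end{pmatrix},
\]
where the bottom-right block is a zero matrix of size $(d - |S|) \times (d - |S|)$. The spectrum of such a block-diagonal matrix is the union of the spectra of its diagonal blocks, so the eigenvalues of $B$ are those of $A_0$ together with a zero of multiplicity $d - |S|$. In particular $\rho(B) = \rho(A_0)$. Combining this with the previous paragraph gives $\rho(A_0) = \rho(B) \le \rho(A)$, as desired.

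There is no real obstacle here: the only subtlety is to notice that padding with zeros preserves the spectral radius of $A_0$, which is immediate from the block-diagonal structure after permutation. No appeal to Perron–Frobenius or to irreducibility is needed, which is appropriate since the lemma makes no such assumption on $A$ or $A_0$.
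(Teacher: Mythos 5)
Your proposal is correct and is essentially the same as the paper's proof: both zero out the deleted rows and columns to get a matrix $B$ with $0\le B\le A$, apply \eqref{irred-bf1}, and observe that $B$ and $A_0$ have the same nonzero spectrum (the paper states this directly; you make it explicit via the permuted block-diagonal form). No issues.
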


\begin{proof}
$A_0$ is obtained from $A$ be deleting certain rows and columns, say the $i_1,
\ldots, i_k$ rows and columns. Let $B$ be the matrix obtained from $A$ by 
changing all entries in the $i_1,\ldots, i_k$ rows and 
columns to 0's. By \eqref{irred-bf1},
$\rho(A)\ge \rho(B)$.
Now delete those rows and columns from $B$ to obtain $A_0$. It is easy to check that
the non-zero eigenvalues of $A_0$ and $B$ are the same.
\end{proof}

We say that $\calP(J)$ holds if the following occurs.

\begin{definition} \label{Prop-PJ}
{$\calP(J)$: Let $R'$ be any principal submatrix of $R$ of
size $J\times J$.
Let $Q'=I_J-R'$. For any  semimartingale $Y'$ taking values
in $\calC_J[0,t_0]$ with $Y_0\ge 0$ and measurable with respect to
a filtration $\{\calF_t\}$ there exist $J$-dimensional processes
$X'$ and $M'$ with trajectories in $\calC_J[0,t_0]$
and adapted to $\{\calF_t\}$, $X'(t)\ge 0$ for all $t$,
$M'(0)=0$, each component of $M'$ is non-decreasing and increases only
when the corresponding component of $X'$ is 0, and
$$X'(t)=Y'(t)+R'M'(t).$$} 
\end{definition}

Note that for $\calP(J)$ to hold, the condition stated in 
Definition \ref{Prop-PJ} has to hold for each continuous semimartingale $Y'$,
not just for a particular one.

We will prove that Property $\calP(J)$ holds for each $J\le d$ by induction, but
first we must show how to decompose the set of indices into communicating classes. This decomposition is analogous to decomposing a Markov chain state space into communicating classes.

Let $A$ be a $J\times J$ matrix. 
We say $i\to j$  if there exist $i_0,i_1, \ldots, i_k$ such that 
$i_0=i$, $i_k=j$, and $A_{i_mi_{m+1}}\ne 0$ for $m=0, \ldots, k-1$.
We say $i$ and $j$ are equivalent if $i=j$ or if $i\to j$ and $j\to i$. 
If an equivalence class has more than one element, let $i$ and $j$ be any two elements;  then
$i\to j$ and $j\to i$, so $i\to i$ and $j\to j$. It follows from this that an equivalence class with at least two elements is irreducible in the matrix sense.
Note also that a square matrix is irreducible if and only if its transpose
is irreducible.

Let $E_1, \ldots, E_m$ be the equivalence classes.
Let us write $E_k\Rightarrow E_\ell$ if there exists
$i\in E_k$ and $j\in E_\ell$ such that $i\to j$. We cannot have both
$E_k\Rightarrow E_\ell$ and $E_\ell\Rightarrow E_k$ for $\ell\ne k$ or else
$E_k$ and $E_\ell$ would not be distinct equivalence classes.

\begin{proposition}\label{general-P1}
Suppose $R$ is completely-$\calS$, $Q=I_d-R$, and $\rho(|Q|)\le 1$. Then
Property $\calP(J)$ holds for $J=d$.
\end{proposition}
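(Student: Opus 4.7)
The plan is to reduce to the irreducible case handled by Theorem \ref{irred-T3} by solving on one equivalence class of $|Q|$ at a time, in a topological order induced by the partial order $\Rightarrow$ on classes.

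First I would partition $\{1,\dots,d\}$ into the equivalence classes $E_1,\dots,E_m$ of $|Q|$, which coincide with those of $R$ since the two matrices have the same off-diagonal zero pattern. Each class is either a singleton, in which case the corresponding principal submatrix of $Q$ is zero, or has at least two elements and, by definition, is irreducible in the matrix sense. The relation $\Rightarrow$ on distinct classes is easily checked to be transitive, and it is asymmetric as noted in the text, so it is a strict partial order; after relabeling I may assume $E_k \Rightarrow E_\ell$ implies $\ell < k$.

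Next I would construct $(X_j, M_j)_{j \in E_k}$ inductively in $k = 1, \dots, m$. A key observation is that if $i \in E_k$ and $j \in E_\ell$ with $\ell > k$ and $R_{ij} \ne 0$, then $i \to j$, hence $E_k \Rightarrow E_\ell$, contradicting $\ell > k$. Thus for $i \in E_k$,
$$X_i(t) = \wt Y_i(t) + \sum_{j \in E_k} R_{ij} M_j(t), \qquad \wt Y_i(t) := Y_i(t) + \sum_{\ell < k} \sum_{j \in E_\ell} R_{ij} M_j(t),$$
and $\wt Y|_{E_k}$ involves only quantities already constructed in earlier steps. It is a continuous $\{\calF_t\}$-adapted semimartingale with $\wt Y(0) \ge 0$, since each previously constructed $M_j$ is a continuous non-decreasing adapted process starting at $0$. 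The restricted matrix $R' := R|_{E_k \times E_k}$ is a principal submatrix of $R$, hence completely-$\calS$, and $Q' := I_{|E_k|} - R'$ satisfies either $Q' = 0$ (singleton case) or $|Q'|$ irreducible with $\rho(|Q'|) \le \rho(|Q|) \le 1$ by Lemma \ref{general-L1}. In either case Theorem \ref{irred-T3} applies and yields $(X_j, M_j)_{j \in E_k}$ with all the required properties.

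Concatenating the $m$ partial solutions gives $(X, M)$ satisfying $X = Y + RM$ with $X(t) \ge 0$ for all $t$, $M(0) = 0$, each $M_j$ continuous and non-decreasing, and $M_j$ increasing only when $X_j = 0$, which is exactly $\calP(d)$. The hardest step is setting up the partial order on classes and establishing the "no downstream coupling" observation that validates the sequential construction; once these are in hand, the rest is a routine assembly of Theorem \ref{irred-T3} and Lemma \ref{general-L1}.
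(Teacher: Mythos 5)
Your proposal is correct and follows essentially the same route as the paper: decompose the index set into communicating classes of $|Q|$, order the classes so that each block's equation involves only local times from already-solved blocks, and apply Theorem \ref{irred-T3} to each (trivial or irreducible) block with the previously constructed local times absorbed into the driving semimartingale. The only difference is organizational — you perform a single topological sort of all classes, whereas the paper peels off one minimal class and invokes an induction hypothesis $\calP(J)$ on the remaining dimensions — but the underlying decomposition and the use of Lemma \ref{general-L1} and Theorem \ref{irred-T3} are identical.
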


\begin{proof}
We show Property $\calP(J)$ holds for each $J\le d$ using induction. The case when
$J=1$ follows by Theorem \ref{irred-T3}.
We suppose $J_0\ge 2$ and that Property $\calP(J)$ holds for $J<J_0$, and we will
prove that Property $\calP(J)$ holds for $J=J_0$.

Let $R'$ be a $J_0\times J_0$ principal submatrix of $R$, and let
$Q'=I_{J_0}-R'$. 
Since $R’$ is a principal submatrix of $R$, $Q’$ is a principal submatrix of $Q$,  and, therefore,
$|Q’|$ is a principal submatrix of $|Q|$. We assumed that $\rho(|Q|)\le 1$ so Lemma \ref{general-L1} implies that $\rho(|Q'|)\le 1$.
If  $|Q'|$ is irreducible, then
we apply Theorem \ref{irred-T3} and we have that Property $\calP(J)$ holds
and we are done. 

So we suppose $|Q'|$ is not irreducible.
Let $A=|Q'|^T$. We will decompose 
$A$ 
rather than $|Q'|$ because if $R_{ij}\ne 0$, then
the local time $M_j$ of the component $X_j$ gives a push to the component $X_i$; we want the
notation $i\to j$ to be consistent with this.

Decompose $A$ into equivalence classes as described above. There must be 
at least one $E_{i_0}$ such that $E_k\not\Rightarrow E_{i_0}$ for all $k\ne {i_0}$. 
By renumbering the axes we may suppose $E_{i_0}=\{1, \ldots, j_1\}$ for
some $1\le j_1< J_0$. 
If $j\in E_{i_0}$ and $k\notin E_{i_0}$, then $k\not\to j$, so
\begin{equation}\label{decompose} |Q'|_{jk}=A^T_{kj}=0.
\end{equation}

Let $Y'$ be a $J_0$-dimensional semimartingale adapted to $\{\calF_t\}$ with $Y'(0)\ge 0$.
Let $R^{(a)}$ be the $j_1\times j_1$ principal submatrix of $R'$ obtained by deleting the
$j_1+1, \ldots, J_0$ rows and columns of $R'$, and let $Q^{(a)}=I_{j_1}-R^{(a)}$.
By our construction, either $Q^{(a)}$ will be a $1\times 1$ matrix or $|Q^{(a)}|$
will be irreducible. Since $R$ is completely-$\calS$, so is $R^{(a)}$.
By Lemma \ref{general-L1},  
$\rho(|Q^{(a)}|)\le 1$. 
Let $Y^{(a)}$ be the $j_1$-dimensional vector obtained from
 $Y'$ by deleting  the $j_1+1, \ldots, J_0$ entries.
Using Theorem \ref{irred-T3} 
we can find $X^{(a)}$ and $M^{(a)}$ satisfying the 
Skorokhod problem 
for $R^{(a)}$ and $Y^{(a)}$.
Also both $X^{(a)}$ and $M^{(a)}$ are adapted to $\{\calF_t\}$.
Our solution $(X^{(a)},M^{(a)})$ satisfies
\begin{equation}\label{general-E10}
X_j^{(a)}(t)=Y_j^{(a)}(t)+\sum_{k=1}^{j_1} R^{(a)}_{jk}M^{(a)}_k(t), \qquad j=1, \ldots, j_1.
\end{equation}

Let $ R^{(b)}$ be the principal submatrix of $R'$ obtained by deleting the
$1, \ldots, j_1$ rows and columns, let $Q^{(b)} =I_{J_0-j_1}-R^{(b)}$, and 
note as above $\rho(|Q^{(b)}|)\le 1$.
Let $Y^{(b)}$ be the $(J_0-j_1)$-dimensional vector-valued semimartingale
defined by 
\begin{equation}\label{general-E21}
Y^{(b)}_{j}(t)=Y'_{j+j_1}(t)+\sum_{k=1}^{j_1}R'_{j+j_1,k}M^{(a)}_{k}(t), \qquad j=1, \ldots, J_0-j_1.
\end{equation}

The process $Y^{(b)}$ is adapted to $\{\calF_t\}$.
Using our induction hypothesis we can find
$(X^{(b)},M^{(b)})$ adapted to $\{\calF_t\}$ satisfying the Skorokhod problem, and in particular,
\begin{equation}\label{general-E12}
X_{j}^{(b)}(t)=Y_{j}^{(b)}(t)+\sum_{k=1}^{J_0-j_1} R^{(b)}_{jk}M^{(b)}_k(t), \qquad j=1, \ldots,J_0- j_1.
\end{equation}

We now set $X'_j(t)$ equal to $X^{(a)}_j(t)$ if $j\le j_1$ and equal to
$ X^{(b)}_{j-j_1}$ if $j>j_1$ and similarly for $M'$.

If $j\le j_1$ and $k>j_1$, then $j\in E_{i_0}$ and $k\notin E_{i_0}$ and
by \eqref{decompose}, $R'_{jk}=Q_{jk}=0$.
Using this and \eqref{general-E10} we obtain
\begin{equation}\label{general-E31}
X'_j(t)=Y'_j(t)+\sum_{k=1}^{J_0} R'_{jk}M'_k(t).
\end{equation}
If $j>j_1$, using
\eqref{general-E21} and \eqref{general-E12}, we see that
$X^{(b)}$ solves  
\begin{equation}\label{general-E13}
X^{(b)}_{j-j_1}(t)
=Y'_{j}(t)+\sum_{k=1}^{j_1} R^{(a)}_{jk}M_k^{(a)}(t)
+\sum_{k=1}^{J_0-j_1}R^{(b)}_{j-j_1,k}M_k^{(b)}(t).
\end{equation}
Then
\begin{equation}
X'_j(t)=X^{(b)}_{j-j_1}(t)=Y'_j(t)+\sum_{k=1}^{j_1} R'_{jk}M'_k(t)
+\sum_{k=j_1+1}^{J_0} R'_{jk} M'_k(t).
\end{equation}
Therefore \eqref{general-E31} is also valid if $j>j_1$.

Thus
Property $\calP(J_0)$ holds.
By induction, Property $\calP(J)$ holds for all $J$, and in particular, for $J=d$.
\end{proof}

If we now let $Y=B$, a $d$-dimensional Brownian motion started at $x_0\in D$,
and let $\{\calF_t\}$ be the filtration generated by $B$, 
we have constructed a solution $X$ to the Skorokhod problem which is
adapted to the filtration $\{\calF_t\}$. 
A solution adapted to the filtration generated by $B$ is called a strong
solution. 
The solution whose existence is guaranteed by \cite{BEK} is not in general a strong solution.

\begin{proof}[Proof of Theorem \ref{main-theorem}]
Let $B$ be a $d$-dimensional Brownian motion and let $(X,M)$ be the solution
constructed using Proposition \ref{general-P1}. 
Note that our construction shows that $(X,M)$ is adapted to the filtration
generated by $B$.
Let $(Z,N)$ be any other solution satisfying Definition \ref{skor-prob}. 
By \cite[Thm. 1.3]{TW} there is uniqueness in law for the
solution to the Skorokhod problem for a given starting point and $R$ (this
is also referred to as weak uniqueness), and 
consequently the
law of $(B,X,M)$ is equal to the law of $(B,Z,N)$. 

 Let $r$ be a non-negative rational.
Since $X(r)$ is adapted to the filtration generated
by $\{B(s): s\le r\}$, there is a Borel measurable map $\varphi$  from 
${\calC_d[0,r]}$ to $D$ such that $X(r)=\varphi(B)$ a.s. Because the 
laws of $(X,B)$ and $(Z,B)$ are equal, we must have that $Z(r)$ also equals 
$\varphi(B)$ a.s. Therefore $X(r)=Z(r)$ a.s. An analogous argument shows that $M(r)=N(r)$, a.s.

This holds for every non-negative rational. Since $X$ and $Z$ are both 
continuous a.s., then $X$ and $Z$ are identical with probability one. Similarly, $M$ and $N$ are indistinguishable.
\end{proof}

\begin{remark}\label{se-and-wu} The preceding proof 
that strong existence plus weak uniqueness implies pathwise uniqueness is the same as a well-known proof for
the analogous result for stochastic differential equations. 
\end{remark}

\section{The two dimensional case}\label{sect-2d}

In this section we consider only the deterministic Skorokhod problem in the case $d=2$.

\begin{lemma}\label{L1-uniq} Suppose $C>0$. There is a unique solution for every continuous
driving function for the deterministic Skorokhod problem with matrix $R=\begin{pmatrix}
        1 & a_1  \\
        a_2& 1 \\
        \end{pmatrix}$
if and only if there is a unique solution for every continuous driving function
for the Skorokhod problem with matrix $S=\begin{pmatrix}
        1 & Ca_1  \\
        a_2/C& 1 \\
        \end{pmatrix}$.
\end{lemma}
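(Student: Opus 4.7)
The plan is to exhibit an explicit bijection between solutions of the Skorokhod problem with matrix $R$ and those with matrix $S$ by rescaling the first coordinate. Let
\[
D=\begin{pmatrix} C & 0 \\ 0 & 1 \end{pmatrix},
\]
and, given $(g,m)$ satisfying $g(t)=f(t)+Rm(t)$, I would set $\wt g=Dg$, $\wt m=Dm$, $\wt f=Df$. Multiplying the Skorokhod equation by $D$ gives $\wt g(t)=\wt f(t)+DR\, m(t)$, and a direct computation shows that $DR = SD$, so that
\[
\wt g(t)=\wt f(t)+S\wt m(t).
\]
Since $C>0$, the diagonal matrix $D$ is invertible, preserves the non-negative orthant, preserves non-decreasingness and continuity of each coordinate, preserves the property $m(0)=0$, and satisfies $\wt g_j(t)=0\iff g_j(t)=0$ and $d\wt m_j=0\iff dm_j=0$. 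Hence $(\wt g,\wt m)$ satisfies all four conditions of Definition \ref{intro-SP} relative to the matrix $S$ and driving function $\wt f$.

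Conversely, given any continuous $\wt f$ with $\wt f(0)\in D$, the driving function $f=D^{-1}\wt f$ is again continuous with $f(0)\in D$, and any solution $(\wt g,\wt m)$ of the Skorokhod problem with matrix $S$ and driver $\wt f$ yields a solution $(g,m)=(D^{-1}\wt g, D^{-1}\wt m)$ with matrix $R$ and driver $f$, by the same identity $DR=SD$ read in reverse. Thus the map $(g,m)\mapsto(Dg,Dm)$ is a bijection between the solution set for $(R,f)$ and the solution set for $(S,Df)$, and its inverse is given by left multiplication by $D^{-1}$.

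With this bijection in hand the equivalence is immediate. If uniqueness holds for every continuous driver with matrix $S$, and $f$ is any continuous driver with $f(0)\in D$, then two solutions $(g,m),(\ol g,\ol m)$ for $(R,f)$ map to two solutions for $(S,Df)$, which must coincide, forcing $(g,m)=(\ol g,\ol m)$. The other direction is symmetric, since every continuous driver $\wt f$ for the $S$-problem arises as $D f$ for $f=D^{-1}\wt f$. There is no real obstacle here; the only point to check carefully is that the correspondence preserves each of the four defining conditions of the Skorokhod problem, which follows from $C>0$ and the diagonal form of $D$.
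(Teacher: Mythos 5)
Your proof is correct and is essentially the same argument as the paper's: both rescale one coordinate by a positive factor so that the diagonal conjugation carries $R$ to $S$ (the paper uses $\mathrm{diag}(1,1/C)$, you use $\mathrm{diag}(C,1)$; these differ only by an overall positive scalar and are equivalent), and both observe that such a rescaling preserves all four defining conditions of the Skorokhod problem. The only blemish is notational: you denote your diagonal matrix by $D$, which collides with the paper's use of $D$ for the orthant in phrases like ``$f(0)\in D$,'' so a different letter should be used.
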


\begin{proof}
 If we write out $g=f+ Rm$ in coordinates and multiply the second equation by 
$1/C$, we get
\begin{align*}
g_1&=f_1+ m_1+ a_{1}m_2,\\
\frac1{C}g_2&=\frac1{C}f_2+\frac1{C} a_{2}m_1+\frac1{C} m_2.
\end{align*}
Let $\wt m_1=m_1$, $\wt g_1=g_1$, $\wt f_1=f_1$, and
$$\wt m_2=\frac1{C}m_2, \quad \wt g_2=\frac1{C}g_2, \quad \wt f_2=\frac1{C}f_2.$$
We then have 
$$\wt g=\wt f+ S \wt m.$$

It follows that there will be two distinct solutions to the Skorokhod problem for a driving
function $f$ with respect to the matrix $R$ if and only if there are  two distinct solutions
to the Skorokod problem with driving function $\wt f$ with respect to the
matrix $S$.
\end{proof}

\begin{proof}[Proof of Theorem \ref{T1-uniq}]
In view of Lemma \ref{L1-uniq}, if $a_1>0, a_2>0$ and $a_1a_2=1$, we may
reduce the question of uniqueness for the Skorokhod problem to the case where
$a_1=a_2=1$.

Let $u_1=\tfrac1{\sqrt 2}(1,1)$ and $u_2=\tfrac1{\sqrt 2}(1,-1)$. Let $(f_{u_1},f_{u_2})$ be the coordinates of $f$ with respect to the orthonormal basis
$(u_1,u_2)$. Thus $f_{u_j}$ is the inner product of $(f_1,f_2)$ with $u_j$, $j=1,2$. 
Define $g_{u_j}$ and $\ol g_{u_j}$, $j=1,2$, similarly. We use $(f_1,f_2)$, $(g_1,g_2)$, $(\ol g_1, \ol g_2)$, $(m_1,m_2)$,
and $(\ol m_1,\ol m_2)$ for the coordinates with respect to the usual basis $\{(1,0), (0,1)\}$.

Notice $Rm=(m_1+m_2, m_1+m_2)$.
A calculation shows that $g=f+Rm$  implies 
\begin{align}
g_{u_1}(t)&=f_{u_1}(t)+{\sqrt 2} (m_1(t)+m_2(t)),\label{E1-uniq}\\
g_{u_2}(t)&=f_{u_2}(t),\notag
\end{align}
and similarly with $g$ replaced by $\ol g$ and $m$ replaced by $\ol m$. 
Therefore $g_{u_2}(t)=f_{u_2}(t)=\ol g_{u_2}(t)$ for all $t$.

Let $v(t)=|g_{u_1}(t)-\ol g_{u_1}(t)|$. 
The function $v$ is continuous and $v(0)=0$. 
Let $t_0>0$ and suppose $g_{u_1}(t_0)< \ol g_{u_1}
(t_0)$. 
With respect to the coordinate
system $(u_1,u_2)$ the quadrant $D$ is a wedge symmetric about the $u_1$
axis. Therefore regardless of whether or not $(g_{u_1}(t_0), g_{u_2}(t_0))$
is on the boundary of the wedge, we see that $(\ol g_{u_1}(t_0), \ol g_{u_2}(t_0))$
is in the interior of $D$ since $g_{u_2}(t_0)=\ol g_{u_2}(t_0)$. 
Hence there exists an $\eps>0$ such that 
$\ol g_{u_1}(t)-\ol f_{u_1}(t)$ does not increase for $t\in[t_0,t_0+\eps]$. 
The quantity $ g_{u_1}(t)- f_{u_1}(t)$ is always non-decreasing and might possibly
increase for some  times in $[t_0,t_0+\eps]$, but in any case
$v(t)$ is non-increasing for $t\in [t_0,t_0+\eps]$. 
The case where $\ol g_{u_1}(t_0)<  g_{u_1} (t_0)$ is treated exactly the same. 
Therefore $v$ is non-increasing on $\{t:v(t)\ne 0\}$. This implies that
$v$ is identically 0.
\end{proof}

\begin{remark}\label{R1-uniq}
Note that we do not assert that $m=\ol m$. This is because it need not be true
in the case $a_1=a_2=1$. Let $g(t)$ be identically $0$ and $f(t)=(-t,-t)$. For $m=(m_1,m_2)$ we can take any choice of non-decreasing continuous functions $m_1(t)$ and
$m_2(t)$ that start at 0 such that $m_1(t)+m_2(t)=t$.

For any matrix $R$ that does not have determinant 0 we can invert $R$, and then
$m$ is uniquely determined once $g$ is from the formula $m=R^{-1}(g-f)$.
\end{remark}

We now turn to non-uniqueness.


\begin{proof}[Proof of Theorem \ref{T2-non}] 
By virtue of Lemma \ref{L1-uniq} we may suppose $a_1=\gamma>1$ and $a_2=1$.
Let $t_n=2^{-n}$, $n=0,1,2, \ldots$.
We first define $f_n, g_n, \ol g_n, m_n$, and $ \ol m_n$ on $[0,2^{-n}]$; these
will be the components
that we use to construct our $f$.

Let $f_n(0)=m_n(0)=\ol m_n(0)=0$ and let
\begin{equation}\label{non-E1}
g_n(0)=(0,\gamma^{-n}), \quad \ol g_n(0)=(\gamma^{-n},\gamma^{-n}).
\end{equation}

Let  $s_{n1}=\tfrac14 2^{-n}$, $s_{n2}=\tfrac12 2^{-n}$,
and $s_{n3}=\tfrac34 2^{-n}$. The functions $f_n, g_n, \ol g_n, m_n$, and
$\ol m_n$ will consist of four pieces.

(1) First $f_n$ moves left a distance $\gamma^{-n}$ at constant speed. 
To be precise, let $f_n(s_{n1})=(-\gamma^{-n}, 0)$.
Let 
$$g_n(s_{n1})=(0,2\gamma^{-n}), \quad \ol g_n(s_{n1})=(0,\gamma^{-n}), \quad 
m_n(s_{n1})=(\gamma^{-n},0), \quad \ol m_n(s_{n1})=0.
$$
Extend the definition of  each of these to $[0,s_{n1}]$ by linear interpolation.

(2) Next $f_n$ moves $\gamma^{-n+1}$ to the right. That is, set
\begin{align*}
f_n(s_{n2})&=(\gamma^{-n+1}-\gamma^{-n},0), \quad g_n(s_{n2})=(\gamma^{-n+1},2\gamma^{-n}), \quad \ol g_n(s_{n2})
=(\gamma^{-n+1},\gamma^{-n}),\\
m_n(s_{n2})&=(\gamma^{-n},0), \quad \ol m_n(s_{n2})=0.
\end{align*} 
Define each of these on $[s_{n1},s_{n2}]$ by linear interpolation.                   

(3) For the third piece, let  $f_n$ move down a distance $2\gamma^{-n}$, which 
 leads to
\begin{align*}
f_n(s_{n3})&=(\gamma^{-n+1}-\gamma^{-n},-2\gamma^{-n}), \quad g_n(s_{n3})=(\gamma^{-n+1},0), \quad \ol g_n(s_{n3})
=(2\gamma^{-n+1},0),\\
m_n(s_{n3})&=(\gamma^{-n},0), \quad \ol m_n(s_{n3})=(0,\gamma^{-n}).
\end{align*}                   
Again, use linear interpolation between $s_{n2}$ and $s_{n3}$.

(4) Finally $f_n$ moves diagonally up and to the left as follows.
Set
\begin{align}
f_n(2^{-n})&=(-\gamma^{-n},\gamma^{-n+1}-2\gamma^{-n}), \quad g_n(2^{-n})=(0,\gamma^{-n+1}), \label{non-E2}\\
\quad \ol g_n(2^{-n})
&=(\gamma^{-n+1},\gamma^{-n+1}), \quad
m_n(s_{n3})=(\gamma^{-n},0), \quad \ol m_n(s_{n3})=(0,\gamma^{-n}).\notag
\end{align}                  
Use linear interpolation once more for values between $s_{n3}$ and $2^{-n}$.

Observe that 
\begin{equation}\label{non-E3}
g_n(t)-g_n(0)=f_n(t)+Rm_n(t)
\end{equation}
 when $t=0, s_{n1}, s_{n2}, 
s_{n3}, 2^{-n}$, and similarly when $g_n$ and $m_n$ are
replaced by $\ol g_n$ and $\ol m_n$.
Since all these
quantities are defined by linear interpolation for times in between the above
values of $t$, we have \eqref{non-E3} holding for all $t\in [0,2^{-n}]$
and the same when $g_n$ and $m_n$ are replaced by $\ol g_n$ and $\ol m_n$.
 
For $t\in (2^{-n}, 2^{-n+1}]$ define
$$f(t)=f_n(t-2^{-n})+\sum_{k=n+1}^\infty f_k(2^{-k}).$$
The series converges by \eqref{non-E2}. 
We use the same formula with $f,f_k,f_n$ replaced by
$m,m_k,m_n$ and $\ol m,\ol m_k, \ol m_n$ to define $m$ and $\ol m$.
For $t\in (2^{-n}, 2^{-n+1}]$ define $g$ by
$$g(t)=g_n(t-2^{-n})$$
and define $\ol g$ similarly.

Given \eqref{non-E3} it is straightforward to check that $g(t)=f(t)+Rm(t)$ and that the analogous
equation holds for $\ol g$.
Clearly $g$ and $\ol g$ are distinct.

The only thing left to check is that $m_j$ increases only when $g_j=0$, $j=1,2$, and the analogous result for $\ol m, \ol g$.
We do this for the first coordinate of $m$, the other cases being almost identical. Note that
the first coordinate of $m$ increases 
only when the first coordinate of one of the $m_n$
increases. By our construction, at these times the first coordinate of $g_n$
is 0, which implies that the first coordinate of $g$ is 0 at these times.
\end{proof}


\end{document}